\documentclass[12pt]{amsart}
\headheight=8pt     \topmargin=0pt
\textheight=624pt   \textwidth=432pt
\oddsidemargin=18pt \evensidemargin=18pt

\usepackage{amssymb}

\newtheorem{theorem}{Theorem}[section]
\newtheorem{lemma}[theorem]{Lemma}
\newtheorem{proposition}[theorem]{Proposition}
\newtheorem{cor}[theorem]{Corollary}
\theoremstyle{definition}
\newtheorem{definition}[theorem]{Definition}
\newtheorem{example}[theorem]{Example}

\theoremstyle{remark}
\newtheorem{remark}[theorem]{Remark}
\theoremstyle{conjecture}

\theoremstyle{problem}
\newtheorem{problem}[theorem]{Problem}

\numberwithin{equation}{section}

\newcommand{\C}{\mathbb{C}}

\newcommand{\N}{\mathbb{N}}

\newcommand{\R}{\mathbb{R}}
\newcommand{\T}{\mathbb{T}}
\newcommand{\Z}{\mathbb{Z}}
\newcommand{\cG}{\mathcal{G}}
\newcommand{\cK}{\mathcal{K}}
\newcommand{\cL}{\mathcal{L}}
\newcommand{\cO}{\mathcal{O}}
\newcommand{\cT}{\mathcal{T}}

\newcommand{\cF}{\mathcal{F}}
\newcommand{\cM}{\mathcal{M}}

\newcommand{\fT}{\mathfrak{T}}
\newcommand{\fX}{\mathfrak{X}}
\newcommand{\fZ}{\mathfrak{Z}}

\newcommand\Tr{\operatorname{Tr}}

\newcommand\Ad{\operatorname{Ad}}

\newcommand\inpr[2]{\langle{#1,#2}\rangle}
\newcommand{\tM}{\widetilde{M}}
\newcommand{\tP}{\tilde{P}}
\newcommand{\tX}{\tilde{X}}
\newcommand{\htau}{\hat{\tau}}
\newcommand{\oo}{\overline{\omega}}

\newcommand{\ex}{\operatorname{ex}}

\title[The flow of weights and the Cuntz-Pimsner algebras]
{The flow of weights and the Cuntz-Pimsner algebras} 

\author{Masaki Izumi}
\address{Department of Mathematics\\ Graduate School of Science\\
Kyoto University\\ Sakyo-ku, Kyoto 606-8502\\ Japan}
\email{izumi@math.kyoto-u.ac.jp}

\subjclass[2010]{ 
Primary 46L30; Secondary 37A55}
\keywords{KMS states, Cuntz-Pimsner algebras, flow of weights, Poisson boundary, tail boundary}

\thanks{Supported in part by JSPS KAKENHI Grant Number JP15H03623}

\begin{document} 


\begin{abstract} We describe the flows of weights for von Neumann algebras 
arising from KMS states for the gauge actions of the Cuntz-Pimsner algebras in terms of 
Poisson (tail) boundaries. 
\end{abstract}

\maketitle

\section{Introduction} The Kubo-Martin-Schwinger (abbreviated as KMS) condition characterizes 
equilibrium states of quantum systems with infinitely many degrees of freedom. 
In the language of C$^*$-dynamical systems, it was first formulated by Haag-Hugenholtz-Winnink \cite{HHW67}, 
which laid the foundation for an operator algebraic approach to quantum statistical mechanics.  
Even apart from its relevance in physics, it also has a great influence over the purely mathematical theory of operator algebras.  
Indeed, soon after their work Takesaki \cite{T70} showed that the KMS condition uniquely characterizes 
the modular automorphism group of a given state of a von Neumann algebra, which is one of the most fundamental facts in Tomita-Takesaki theory. 
This paper is mainly concerned with purely operator algebraic features of KMS states. 

For a given $\R$-action on a C$^*$-algebra, it is well known that its KMS state extends to a faithful normal state on 
the weak closure of the image of the GNS representation, and the $\R$-action extends to 
the modular automorphism group (with an appropriate time scaling). 
This is still the most handy way to construct type III factors and compute their modular objects. 
On the other hand, Connes-Takesaki \cite{CT77} introduced the notion of the flow of weights for a von Neumann algebra, 
which turns out to be a complete invariant for the injective type III factors. 
Thus in a study of a specific class of KMS states, it would be desirable to compute the flows of weights for 
the von Neumann algebras arising from them in order to obtain more detailed information. 
One of the main purposes of this paper is to accomplish this task in the case of KMS states for the gauge actions of 
the Cuntz-Pimsner algebras, which have been extensively studied in these years 
(it is practically impossible to cite all the related references for this subject, and we content ourselves with an incomplete list,  
\cite{PWY00}, \cite{EL03}, \cite{LN04}, \cite{IKW07}, \cite{KW13}, \cite{LRRM14}, \cite{AHR14}, \cite{Kak15}). 
Although the structure of the simplex of $\beta$-KMS states of the gauge action have been well studied 
in the literature, the structure of the von Neumann algebras arising from them has been studied only in the restricted case of 
type III$_\lambda$ factors with $\lambda\neq 0$ (see \cite{OP78}, \cite{EFW84}, \cite{I93}, \cite{O02}, \cite{IKW07}). 

In general it is not so easy to concretely describe the flow of weights for a given type III factor unless the flow is transitive. 
For example, in the case of group measure space factors, the flows are given by so-called the associated flows, 
which are known to be difficult to compute except for special cases (see \cite{Kr76}, \cite{HOO75}, \cite{HO86}). 
In this paper, we follow the spirit of Connes-Woods \cite{CW89}, who showed that the flows for the Araki-Woods factors 
can be identified with the Poisson boundaries of time dependent random walks on the real line. 
Since the structure of the Cuntz-Pimsner algebras is somewhat more involved than that of the UHF algebras, 
our description of the corresponding random walks (or Markov operators) is not as direct as in the case of Araki-Woods factors.   

Since the gauge action is periodic, the flow for the von Neumann algebra $M$ arising from a $\beta$-KMS state of the gauge action 
is expressed as a suspension flow over a base transformation with a constant ceiling functions $|\beta|$. 
We construct a Markov operator whose Poisson boundary is identified with the center of $M$, 
and whose tail boundary together with the time translation is identified with the base transformation 
of the flow (Theorem \ref{main}). 
We show, by an easy example (Example \ref{III0}), that any probability-measure-preserving ergodic transformation can be realized 
as the base transformation of the flow, and hence show that type III$_0$ factors occur from the gauge actions 
of the Cuntz-Pimsner algebras. 
We also give examples constructed from correspondences whose left actions are degenerate, having rich structure 
of the KMS states of infinite type (Example \ref{non-unital}, Example \ref{GICAR}). 

The author would like to thank Hiroki Matui and Kengo Matsumoto for useful discussions on substitution 
dynamical systems. 
He also would like to thank the anonymous referee for careful reading, which in particular enables him 
to correct the statement of Theorem \ref{main} for negative $\beta$.   
\section{Infinite type KMS states}
Let $A$ be a $C^*$-algebra, and let $E$ be a C$^*$-correspondence of $A$, which is a right Hilbert $A$-module 
together with a homomorphism $\Phi$ from $A$ to the C$^*$-algebra $\cL(E)$ of bounded adjointable operators on $E$. 
For $\xi,\eta\in E$, we denote by $\theta_{\xi,\eta}\in \cL(E)$ the rank one operator defined by 
$\theta_{\xi,\eta}\zeta=\xi\cdot\inpr{\eta}{\zeta}_A$. 
We denote by $\cK(E)$ the closed ideal of $\cL(E)$ generated by the rank one operators, 
and set $I_E=\Phi^{-1}(\cK(E))$. 

For the definition of the Cuntz-Pimsner algebra $\cO_E$ for $E$, we adopt Pimsner's original one in \cite{P97}, 
which is enough for our purpose, though we do not assume that either $E$ is full, 
the left action $\Phi$ is non-degenerate, or it is injective (see Remark \ref{K} below).

\begin{definition}
The Cuntz-Toeplitz algebra $\cT_E$ for $E$ is the universal C$^*$-algebra generated by symbols $\{\Pi(a)\}_{a\in A}$ and 
$\{T_\xi\}_{\xi\in E}$ satisfying the following conditions:
\begin{itemize} 
\item the map $A\ni a\mapsto \Pi(a)\in \cT_E$ is a representation,  
\item $\Pi(a)T_\xi=T_{\Phi(a)\xi}$ for any $a\in A$ and $\xi\in E$, 
\item $T_\xi^*T_\eta=\Pi(\inpr{\xi}{\eta}_A)$ for any $\xi,\eta\in E$. 
\end{itemize} 
We denote by $\Psi$ the isomorphism from $\cK(E)$ into $\cT_E$ given by $\Psi(\theta_{\xi,\eta})=T_\xi T_\eta^*$ 
(see \cite{IKW07}). 
The Cuntz-Pimsner algebra $\cO_E$ for $E$ is the quotient of $\cT_E$ by the closed ideal 
generated by $\{\Pi(a)-\Psi\circ\Phi(a)\}_{a\in I_E}$. 
\end{definition}

It follows from the above definition that the map $E\ni\xi\mapsto T_\xi\in \cT_E$ is linear and the relation 
$T_\xi\Pi(a)=T_{\xi\cdot a}$ always holds for any $a\in A$ and $\xi\in E$. 
The full Fock space construction shows that $\Pi$ is always faithful, and in consequence 
we have $\|T_\xi\|=\|\xi\|$ for any $\xi\in E$. 
The algebra $\cO_E$ may be small without additional assumptions on the correspondence $E$. 

Let $\gamma$ be the gauge action of $\cT_E$, that is an $\R$-action on $\cT_E$ given by $\gamma_t(T_\xi)=e^{t\sqrt{-1}}T_\xi$ and 
$\gamma_t(\Pi(a))=\Pi(a)$ for any $\xi\in E$, $a\in A$, and $t\in \R$. 
We denote by $K_\beta(\gamma)$ the set of $\beta$-KMS states of $\gamma$, and by $\ex K_\beta(\gamma)$ the set 
of extreme points in $K_\beta(\gamma)$. 
In what follows, we fix $\omega\in K_\beta(\gamma)$ with $\beta\neq 0,\infty$. 
We denote by $(\pi_\omega,H_\omega,\Omega_\omega)$ the GNS triple for $\omega$, 
and set $M=\pi_\omega(\cT_E)''$, $B=\pi_\omega(\Pi(A))''$. 
We denote by $\oo$ the state of $M$ given by $\oo(x)=\inpr{x\Omega_\omega}{\Omega_\omega}$. 
Then $\oo$ is a faithful normal state of $M$ whose modular automorphism group satisfies 
$\sigma^{\oo}_t(\pi_\omega(x))=\pi_\omega(\gamma_{-\beta t}(x))$. 
Our main goal in this paper is to determine the structure of $M$.

We set $\tau(a)=\omega(\Pi(a))$ for $a\in A$, which is a finite trace of $A$, 
though it may not be a state as we do not assume that $\Phi$ is non-degenerate. 
Note that the representation $\pi_\omega\circ \Pi$ of $A$ is quasi-equivalent to $\pi_\tau$ (see \cite[Lemma 4.1]{I93}). 
Thus the weak closure $B_0=\overline{\pi_\omega(\Pi(A))}^w$ is isomorphic to $\pi_\tau(A)''$, which is a finite von Neumann algebra. 
When $\tau$ is a state (e.g. when $\Phi$ is non-degenerate), we have $B_0=B$.   
When $\tau$ is not a state, the representation $\pi_\omega\circ \Pi$ is degenerate, and 
$B$ is isomorphic to $B_0\oplus \C$. 
Indeed, the unit of $\overline{\pi_\omega(\Pi(A))}^w$ is the limit of 
the net $\{\pi_\omega\circ\Pi(a_i)\}_{i\in I}$ in the strong operator topology, 
where $\{a_i\}_{i\in I}$ is an approximate unit of $A$. 
We set 
\begin{equation}\label{p-infty}
p_\infty=1-s\mathchar`-\lim_{i\in I}\pi_\omega\circ\Pi(a_i),
\end{equation}
which is a projection in $B$. 
Then  
$$\oo(p_\infty)=1-\lim_{i\in I}\omega\circ \Pi(a_i)=1-\lim_{i\in I}\tau(a_i)=1-\|\tau\|.$$
Since $\oo$ is faithful, we see that $p_\infty=0$ if and only if $\tau$ is a state. 
Thus when $\tau$ is not a state, we have $B=B_0\oplus \C p_\infty$.

We may and do choose a subset $\Lambda\subset E$ with the following property:  
the net $\{\sum_{\xi \in F}\theta_{\xi,\xi}\}_{F\Subset \Lambda}$ is an approximate unit of $\cK(E)$, 
where $F\Subset \Lambda$ means that $F$ is a finite subset of $\Lambda$, and the set of finite subsets 
of $\Lambda$ is directed by the inclusion relation. 
In the sequel, whenever we use the symbol $\sum_{\xi\in \Lambda}$, it means the limit of partial sums with respect to 
this directed set.   
Since $\{\sum_{\xi\in F}T_\xi T_\xi^*\}_{F\Subset \Lambda}$ is an approximate unit of the C$^*$-subalgebra 
of $\cT_E$ generated by $\{T_\xi T_\eta^*\}_{\xi,\eta}$, the summation $\sum_{\xi\in \Lambda}\omega(T_\xi T_\xi^*)$ 
converges and is bounded by 1, which does not depend on the choice of $\Lambda$. 

The following definition is consistent with that in \cite{LN04} and \cite{IKW07} 
when $\Phi$ is non-degenerate (see \cite[Lemma 3.6]{IKW07}). 

\begin{definition} We say that $\omega\in K_\beta(\gamma)$ is of infinite type if 
\begin{equation}\label{infinite}
\sum_{\xi\in \Lambda}\omega(T_\xi T_\xi^*)=1.
\end{equation}
We denote by $K_\beta^\infty(\gamma)$ the set of $\beta$-KMS state of $\gamma$ of infinite type. 
\end{definition}

\begin{remark} The word ``type" in the above definition should not be confused with the type of the 
von Neumann algebra $M$. 
\end{remark}

The set $K_\beta^\infty(\gamma)$ is a face of $K_\beta(\gamma)$ in the sense that if  
$\omega\in K_\beta^\infty(\gamma)$ is expressed as $\omega=t\omega_1+(1-t)\omega_2$ with 
$\omega_1,\omega_2\in K_\beta(\gamma)$ and $0<t<1$, then $\omega_1,\omega_2\in K_\beta^\infty(\gamma)$. 
When $\Lambda$ is a finite set, namely $E$ is a finitely generated projective module of $A$, 
then $K_\beta^\infty(\gamma)$ is a closed subset of $K_\beta(\gamma)$ in the weak$*$ topology.

The following lemma is essentially \cite[Theorem 3.7]{IKW07}, which says that 
our main interest is KMS states of infinite type. 

\begin{lemma} Let $\omega\in \ex K_\beta(\gamma)$ be not of infinite type. 
Then $M$ is Morita equivalent to a corner of $\pi_\tau(A)''$, and in particular, it is semi-finite.  
\end{lemma}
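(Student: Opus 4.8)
The plan is to exhibit a single nonzero projection $q\in M$ whose corner $qMq$ lies inside $B$ and which is large enough to recover $M$ up to Morita equivalence. Let $p$ be the limit in the strong operator topology of the increasing net $\{\sum_{\xi\in F}\pi_\omega(T_\xi T_\xi^*)\}_{F\Subset\Lambda}$ of positive operators bounded by $1$; since $\{\sum_{\xi\in F}T_\xi T_\xi^*\}_{F\Subset\Lambda}$ is an approximate unit of the C$^*$-algebra generated by $\{T_\xi T_\eta^*\}_{\xi,\eta}$, the operator $p$ is the unit of its weak closure, hence a projection. Put $q=1-p$. Because $\omega$ is not of infinite type, $\oo(q)=1-\sum_{\xi\in\Lambda}\omega(T_\xi T_\xi^*)>0$, so $q\neq 0$; this is the only point at which the hypothesis is used.

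Two facts drive everything. (i) $q\,\pi_\omega(T_\eta)=0$ for all $\eta\in E$: indeed $\sum_{\xi\in F}T_\xi T_\xi^*T_\eta=\sum_{\xi\in F}T_\xi\,\Pi(\inpr{\xi}{\eta}_A)=T_{(\sum_{\xi\in F}\theta_{\xi,\xi})\eta}$ converges in norm to $T_\eta$, because $\{\sum_{\xi\in F}\theta_{\xi,\xi}\}_{F\Subset\Lambda}$ is an approximate unit of $\cK(E)$ and $E=\overline{\cK(E)\cdot E}$; hence $p\,\pi_\omega(T_\eta)=\pi_\omega(T_\eta)$. (ii) $q$ commutes with $\pi_\omega(\Pi(A))$, and therefore with $B$: by taking adjoints it suffices that $q\,\pi_\omega(\Pi(a))\,p=0$, which follows by letting $F$ increase to $\Lambda$ in the identity $q\,\pi_\omega(\Pi(a))\,\pi_\omega(\sum_{\xi\in F}T_\xi T_\xi^*)=\sum_{\xi\in F}q\,\pi_\omega(T_{\Phi(a)\xi})\,\pi_\omega(T_\xi^*)=0$, using (i). An identical computation gives $p_\infty p=0$, i.e.\ $p_\infty\le q$, which I will use below.

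Next I would show $qMq=Bq$. The $\sigma$-weakly dense $*$-subalgebra of $M$ generated by $\pi_\omega(\Pi(A))$ and the $\pi_\omega(T_\xi)$ is spanned by the words $\pi_\omega(T_{\xi_1}\cdots T_{\xi_m}\Pi(a)T_{\eta_n}^*\cdots T_{\eta_1}^*)$ with $m,n\ge0$; compressing such a word by $q$ annihilates it whenever $m\ge1$ (as $q\,\pi_\omega(T_{\xi_1})=0$) or $n\ge1$ (as $\pi_\omega(T_{\eta_1}^*)q=0$), and sends the case $m=n=0$ to $\pi_\omega(\Pi(a))q$. Since $x\mapsto qxq$ is normal, $qMq$ is the weak closure of $\pi_\omega(\Pi(A))q$; and $\pi_\omega(\Pi(A))q$ is already weakly closed, being the image of a direct summand of $B$ under the normal $*$-homomorphism $b\mapsto bq$. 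Hence $qMq=Bq$, and $b\mapsto bq$ identifies this with a direct summand $Be$ ($e\in Z(B)$) of $B$.

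Finally, because $\omega$ is an extreme $\beta$-KMS state of $\gamma$ and $\beta\neq0$, the von Neumann algebra $M$ is a factor; the nonzero projection $q$ therefore has central support $1$, so $M$ is Morita equivalent, via the bimodule $Mq$, to $qMq=Bq\cong Be$. When $\tau$ is a state we have $B=B_0\cong\pi_\tau(A)''$, so $Be$ is a corner (indeed a factorial direct summand) of the finite von Neumann algebra $\pi_\tau(A)''$, and we are done. When $\tau$ is not a state, $B=B_0\oplus\C p_\infty$ with $B_0\cong\pi_\tau(A)''$; from $p_\infty\le q\le e$ one gets $Be=B_0e_0\oplus\C p_\infty$ for some central $e_0\in Z(B_0)$, and since $Be$ is a factor either $e_0=0$ (whence $M$ is Morita equivalent to $\C$, hence a type~I factor) or $Be=B_0e_0$ is a factorial corner of $\pi_\tau(A)''$; in every case $M$ is Morita equivalent to a corner of a finite von Neumann algebra and is consequently semi-finite. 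The bulk of the argument is routine — the word reduction, the estimates with approximate units; the genuine content is the crux identity $q\,\pi_\omega(T_\eta)=0$ together with the normality-and-closedness step that upgrades it to $qMq=Bq$, and the only residual nuisance is the $p_\infty$-bookkeeping when the left action $\Phi$ is degenerate.
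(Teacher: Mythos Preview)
Your proof follows the paper's route exactly: define $q=1-\sum_\xi S_\xi S_\xi^*$, check $q\neq 0$, $qS_\xi=0$, and $q\in B'$, then use factoriality of $M$ to pass to the corner $qMq$, which you identify (via word reduction) with the weak closure of $\pi_\omega(\Pi(A))q$. The paper compresses this last step to a single line (``since $q_\infty\pi_\omega(T_\xi)=0$, we get $q_\infty Mq_\infty=q_\infty\overline{\pi_\omega(\Pi(A))}^w$''), but the content is identical.

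There is one genuine slip. The ``identical computation'' you invoke for $p_\infty p=0$ would require $p_\infty S_\xi=0$, i.e.\ $\lim_i\Phi(a_i)\xi=\xi$ for an approximate unit $\{a_i\}$ of $A$; this is precisely non-degeneracy of the left action $\Phi$, which is \emph{not} assumed here. (What is always true is $S_\xi p_\infty=0$, coming from the right module structure, hence $p_\infty S_\xi^*=0$; that does not help with $p_\infty S_\xi$.) In fact the correct relation is the opposite one, $p_\infty q=0$: your own word reduction gives $qMq=B_0q$ with $B_0=\overline{\pi_\omega(\Pi(A))}^w$, and since $q\in qMq$ one gets $q=bq$ for some $b\in B_0$, whence $p_\infty q=(p_\infty b)q=0$ because $p_\infty$ annihilates $B_0$. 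With $p_\infty\perp q$ your last-paragraph case analysis dissolves: $Bq=B_0q$ outright, and $b\mapsto bq$ already exhibits $qMq$ as a direct summand of $B_0\cong\pi_\tau(A)''$, regardless of whether $\tau$ is a state. The paper sidesteps the whole issue by writing $q_\infty\overline{\pi_\omega(\Pi(A))}^w$ from the outset and never introducing $p_\infty$ into this argument.
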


\begin{proof} Since $\{\sum_{\xi\in F}T_\xi T_\xi^*\}_{F\Subset \Lambda}$ is an approximate unit of the 
C$^*$-subalgebra of $\cT_E$ generated by $\{T_\xi T_\eta^*\}_{\xi,\eta\in E}$, the net 
$\{\sum_{\xi\in F}\pi_\omega(T_\xi T_\xi^*)\}_{F\Subset \Lambda}$ converges to a projection in the strong operator 
topology, and we set 
$$q_\infty=1-s\mathchar`-\sum_{\xi\in \Lambda}\pi_\omega(T_\xi T_\xi^*).$$
Since $\omega$ is not of infinite type, it is a non-zero projection in $M$ satisfying $q_\infty\pi_\omega(T_\xi)=0$ 
for any $\xi\in E$. 
We claim that $q_\infty\in B'$. 
Indeed, for $\xi\in E$ and $a\in A$, we have 
$$q_\infty \pi_\omega(\Pi(a))\pi_\omega(T_\xi)=q_\infty\pi_\omega(T_{\Phi(a)\xi})=0,$$
and 
$$q_\infty\pi_\omega(\Pi(a))(1-q_\infty)
=\lim_{F\Subset \Lambda}q_\infty\pi_\omega(\Pi(a))\sum_{\xi\in F}\pi_\omega(T_\xi T_\xi^*)=0,$$
which shows the claim. 

Note that since $\omega$ is an extreme KMS state, the von Neumann algebra $M$ is a factor, and it is Morita 
equivalent to $q_\infty M q_\infty$. 
On the other hand, since $q_\infty \pi_\omega(T_\xi)=0$, we get 
$$q_\infty M q_\infty=q_\infty \overline{\pi_\omega(\Pi(A))}^w q_\infty=q_\infty \overline{\pi_\omega(\Pi(A))}^w,$$
which shows the statement. 
\end{proof}

From now on we assume that $\omega$ is of infinite type. 

\begin{lemma} The state $\omega$ and the representation $\pi_\omega$ factor through $\cO_E$. 
\end{lemma}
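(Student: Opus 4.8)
The plan is to show that the representation $\pi_\omega$ annihilates the closed two-sided ideal $J\subset\cT_E$ generated by $\{\Pi(a)-\Psi\circ\Phi(a)\}_{a\in I_E}$. Since $\ker\pi_\omega$ is a closed two-sided ideal of $\cT_E$ containing each such generator, it then contains $J$, so $\pi_\omega$ factors through the quotient $\cO_E=\cT_E/J$; and since $\omega(x)=\inpr{\pi_\omega(x)\Omega_\omega}{\Omega_\omega}$, the state $\omega$ factors through $\cO_E$ as well. As $\pi_\omega$ is a $*$-homomorphism, it suffices to prove that $\pi_\omega(\Pi(a))=\pi_\omega(\Psi\circ\Phi(a))$ for every $a\in I_E$.

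First I would record the identity $\Psi(k)T_\xi=T_{k\xi}$ in $\cT_E$, valid for all $k\in\cK(E)$ and $\xi\in E$. For a rank one operator it follows directly from the defining relations,
$$\Psi(\theta_{\zeta,\eta})T_\xi=T_\zeta T_\eta^* T_\xi=T_\zeta\Pi(\inpr{\eta}{\xi}_A)=T_{\zeta\cdot\inpr{\eta}{\xi}_A}=T_{\theta_{\zeta,\eta}\xi},$$
and the general case follows by linearity and norm continuity, using that $\Psi$ is isometric on $\cK(E)$ and $\|T_\zeta\|=\|\zeta\|$. Consequently, for $a\in I_E$ we have $\Phi(a)\in\cK(E)$, and combining this identity with the relation $\Pi(a)T_\xi=T_{\Phi(a)\xi}$ gives $\big(\Pi(a)-\Psi\circ\Phi(a)\big)T_\xi=0$ in $\cT_E$; hence $\big(\Pi(a)-\Psi\circ\Phi(a)\big)T_\xi T_\xi^*=0$, and therefore
$$\pi_\omega\big(\Pi(a)-\Psi\circ\Phi(a)\big)\sum_{\xi\in F}\pi_\omega(T_\xi T_\xi^*)=0$$
for every finite subset $F\Subset\Lambda$.

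It remains to let $F\uparrow\Lambda$ in this identity, and this is the only step where the infinite type hypothesis enters. As noted in the proof of the previous lemma, the increasing net $\{\sum_{\xi\in F}\pi_\omega(T_\xi T_\xi^*)\}_{F\Subset\Lambda}$ converges in the strong operator topology to a projection; let $q_\infty$ denote $1$ minus this projection. Then $\oo(q_\infty)=1-\sum_{\xi\in\Lambda}\omega(T_\xi T_\xi^*)=0$ by \eqref{infinite}, so faithfulness of the normal state $\oo$ on $M$ forces $q_\infty=0$, that is, $\sum_{\xi\in\Lambda}\pi_\omega(T_\xi T_\xi^*)=1$ in the strong operator topology. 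Passing to the limit over $F$ in the displayed identity then yields $\pi_\omega(\Pi(a)-\Psi\circ\Phi(a))=0$ for all $a\in I_E$, which is what was needed. There is no real obstacle here: the only delicate point is that the net of partial sums converges strongly to a genuine projection (so that $q_\infty$ is a projection and faithfulness of $\oo$ applies), and the infinite type condition is precisely what makes that projection the identity.
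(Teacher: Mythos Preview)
Your proof is correct and uses essentially the same ingredients as the paper: the algebraic identity $(\Pi(a)-\Psi\circ\Phi(a))T_\xi=0$ for $a\in I_E$, together with the strong convergence $\sum_{\xi\in\Lambda}\pi_\omega(T_\xi T_\xi^*)=1$ coming from the infinite type hypothesis and the faithfulness of $\oo$. The only cosmetic difference is that the paper instead expands $(\Pi(a)-\Psi\circ\Phi(a))^*(\Pi(a)-\Psi\circ\Phi(a))$ in norm as $\Pi(a^*a)-\sum_{\xi\in\Lambda}T_\xi T_\xi^*\Pi(a^*a)$ and then evaluates $\omega$ on this to get $0$, appealing to faithfulness of $\oo$ at the end; your route of multiplying $\pi_\omega(\Pi(a)-\Psi\circ\Phi(a))$ on the right by the partial sums and passing to the strong limit is a bit more direct but amounts to the same argument.
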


\begin{proof} Let $a\in I_E=\Phi^{-1}(\cK(E))$. 
Since $\oo$ is faithful, it suffices to show 
$$\omega((\Pi(a)-\Psi\circ \Phi(a))^*(\Pi(a)-\Psi\circ \Phi(a)))=0.$$
Note that we have $\Psi\circ \Phi(a)T_\xi=T_{\Phi(a)\xi}$. 
Since $\{\sum_{\xi\in F}T_\xi T_\xi^*\}_{F\Subset \Lambda}$ is an approximate unit of the C$^*$-subalgebra 
of $\cT_E$ generated by $\{T_\xi T_\eta^*\}_{\xi,\eta}$, we have 
\begin{align*}
\lefteqn{(\Pi(a)-\Psi\circ \Phi(a))^*(\Pi(a)-\Psi\circ \Phi(a)))} \\
 &=\Pi(a^*a)-\Pi(a^*)\Psi\circ\Phi(a)-\Psi\circ \Phi(a^*)\Pi(a)+\Psi\circ \Phi(a^*a) \\
 &=\Pi(a^*a)+\sum_{\xi\in \Lambda}(-\Pi(a^*)\Psi\circ\Phi(a)T_\xi T_\xi^*
 -T_\xi T_\xi^*\Psi\circ \Phi(a^*)\Pi(a)
 +\Psi\circ \Phi(a^*a)T_\xi T_\xi^*)\\
  &=\Pi(a^*a)+\sum_{\xi\in \Lambda}(-\Pi(a^*)T_{\Phi(a)\xi} T_\xi^*
 -T_\xi T_{\Phi(a)\xi}^*\Pi(a)
 +T_{\Phi(a^*a)\xi} T_\xi^*)\\
 &=\Pi(a^*a)+\sum_{\xi\in \Lambda}(-\Pi(a^*a)T_{\xi} T_\xi^*
 -T_\xi T_{\xi}^*\Pi(a^*a)
 +\Pi(a^*a)T_\xi T_\xi^*)\\
 &=\Pi(a^*a)-\sum_{\xi\in \Lambda}T_\xi T_{\xi}^*\Pi(a^*a),
\end{align*}
where convergence is in the norm topology. 
On the other hand, since $\omega$ is of infinite type, the net $\{\sum_{\xi\in F}\pi_\omega(T_\xi T_\xi^*)\}_{F\Subset \Lambda}$ 
converges to 1 in the strong operator topology. 
Thus we get 
\begin{align*}
\lefteqn{\omega((\Pi(a)-\Psi\circ \Phi(a))^*(\Pi(a)-\Psi\circ \Phi(a)))} \\
 &=\omega(\Pi(a^*a))-
 \inpr{\pi_\omega(\sum_{\xi\in \Lambda}T_\xi T_{\xi}^*)\pi_\omega(\Pi(a^*a))\Omega_\omega}{\Omega_\omega}\\
 &=0,
\end{align*}
which shows the statement. 
\end{proof}

\begin{remark}\label{K} In order to include broader classes of examples, Katsura \cite{Kat03} modified the definition of 
the Cuntz-Pimsner algebras, and our $\cO_E$ is a quotient of the modified one. 
Since the GNS representation of an infinite KMS state of $\cT_E$ always factors through $\cO_E$, 
the original definition is good enough for our purpose. 
\end{remark}

For $\omega$ of infinite type, we have
$$\tau(a)=\sum_{\xi\in \Lambda}\omega(\Pi(a)T_\xi T_\xi^*)=e^{-\beta}\sum_{\xi\in \Lambda}\omega(T_\xi^*T_{\Phi(a)\xi})
=e^{-\beta}\sum_{\xi\in \Lambda}\tau(\inpr{\xi}{\Phi(a)\xi}_A).$$
On the other hand, the left-hand side of the condition (\ref{infinite}) is
$$e^{-\beta}\sum_{\xi\in \Lambda}\omega(T_\xi^* T_\xi)
=e^{-\beta}\sum_{\xi\in \Lambda}\omega(\Pi(\inpr{\xi}{\xi}_A))
=e^{-\beta}\sum_{\xi\in \Lambda}\tau(\inpr{\xi}{\xi}_A).$$
Thus an easy modification of the proof in \cite[Theorem 2.1]{LN04} with these conditions show the following theorem. 

\begin{theorem}\label{KMS0} The restriction map $\omega\mapsto \tau=\omega\circ \Pi$ gives an affine isomorphism of 
$K_\beta^\infty(\gamma)$ onto the set of finite traces $\tau$ of $A$ satisfying the following two conditions: 
\begin{equation}\label{KMS1}
e^{-\beta}\sum_{\xi\in \Lambda}\tau(\inpr{\xi}{\xi}_A)=1,
\end{equation}
\begin{equation}\label{KMS2}
e^{-\beta}\sum_{\xi\in \Lambda}\tau(\inpr{\xi}{\Phi(a)\xi}_A)=\tau(a).
\end{equation}
When $\Phi$ is non-degenerate, the above $\tau$ is necessarily a trace state. 
\end{theorem}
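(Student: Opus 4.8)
The plan is to prove that $\omega\mapsto\tau=\omega\circ\Pi$ is a bijection between $K_\beta^\infty(\gamma)$ and the stated set of traces; affinity is obvious, and the forward direction---that for $\omega\in K_\beta^\infty(\gamma)$ the functional $\tau=\omega\circ\Pi$ is a finite trace of $A$ satisfying (\ref{KMS1}) and (\ref{KMS2})---is precisely the computation carried out just above the statement. What remains is injectivity, surjectivity, and the last sentence.

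For injectivity I would first note that, as $\beta\neq 0$, the modular group $\sigma^{\oo}_t=\gamma_{-\beta t}$ exhausts the gauge action, so $\omega=\oo\circ\pi_\omega$ is $\gamma$-invariant and hence vanishes on every spectral subspace of non-zero weight; in particular $\omega\bigl(T_{\mu_1}\cdots T_{\mu_m}\Pi(a)T_{\nu_n}^*\cdots T_{\nu_1}^*\bigr)=0$ whenever $m\neq n$. When $m=n$ I would repeatedly apply the KMS relation $\omega(yT_\xi^*)=e^{-\beta}\omega(T_\xi^*y)$: moving $T_{\nu_1}^*$ to the left, using $T_{\nu_1}^*T_{\mu_1}=\Pi(\inpr{\nu_1}{\mu_1}_A)$ and then $\Pi(b)T_{\mu_2}=T_{\Phi(b)\mu_2}$, shortens the word by one, and after $n$ steps the value equals $e^{-n\beta}\tau\bigl(\inpr{\nu_1\otimes\cdots\otimes\nu_n}{\mu_1\otimes\cdots\otimes\mu_n}_A\,a\bigr)$. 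Since such monomials span a dense $*$-subalgebra of $\cT_E$, this determines $\omega$ from $\tau$.

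For surjectivity I would adapt the argument of \cite[Theorem 2.1]{LN04}. Writing $S_\mu=T_{\mu_1}\cdots T_{\mu_k}$ for $\mu\in E^{\otimes k}$ (so $S_a=\Pi(a)$ for $k=0$) and $\Psi_k(\theta_{\mu,\nu})=S_\mu S_\nu^*$, the fixed-point algebra of $\gamma$ is the norm closure of the increasing union of the $*$-subalgebras $\mathcal{B}_n=\sum_{k=0}^n\Psi_k(\cK(E^{\otimes k}))$. Given $\tau$ satisfying (\ref{KMS1}) and (\ref{KMS2}), I would define $\phi$ on $\Psi_k(\cK(E^{\otimes k}))$ by $\phi(S_\mu S_\nu^*)=e^{-k\beta}\tau(\inpr{\nu}{\mu}_A)$, which is $e^{-k\beta}$ times the trace on $\cK(E^{\otimes k})$ induced by $\tau$ through the right-module structure; its positivity there boils down to the identity $e^{k\beta}\phi(\Psi_k(R^*R))=(\tau\otimes\Tr)(GH)\geq 0$ for finite-rank $R$, where $G$ and $H$ are the positive Gram matrices over $A$ built from $R$ and one uses that $\tau\otimes\Tr$ is a trace on a matrix algebra over $A$. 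Condition (\ref{KMS2}), applied to $a=\inpr{\nu}{\mu}_A$ together with $\inpr{\nu\otimes\xi}{\mu\otimes\xi}_A=\inpr{\xi}{\Phi(\inpr{\nu}{\mu}_A)\xi}_A$, furnishes the compatibility $\phi(S_\mu S_\nu^*)=\sum_{\xi\in\Lambda}\phi(S_{\mu\otimes\xi}S_{\nu\otimes\xi}^*)$, which is exactly what is needed to pass positivity from the individual graded pieces to each $\mathcal{B}_n$ and hence to the fixed-point algebra, while (\ref{KMS1}), combined with (\ref{KMS2}), normalises $\phi$ to a state. Putting $\omega=\phi\circ P$, with $P$ the conditional expectation onto the fixed-point algebra given by averaging over $\gamma$, yields a $\gamma$-invariant state with $\omega\circ\Pi=\tau$; the $\beta$-KMS condition for $\gamma$ is then a routine check on the defining formula, and $\omega$ is of infinite type since $\sum_{\xi\in\Lambda}\omega(T_\xi T_\xi^*)=e^{-\beta}\sum_{\xi\in\Lambda}\tau(\inpr{\xi}{\xi}_A)=1$ by (\ref{KMS1}). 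By the injectivity step, $\omega$ is the unique preimage of $\tau$.

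For the last sentence, when $\Phi$ is non-degenerate I would take an approximate unit $\{a_i\}_{i\in I}$ of $A$; then $\Phi(a_i)\xi\to\xi$ in $E$ and $0\le\inpr{\xi}{\Phi(a_i)\xi}_A\le\inpr{\xi}{\xi}_A$, so dominated convergence in (\ref{KMS2}) with $a=a_i$ gives $\|\tau\|=\lim_i\tau(a_i)=e^{-\beta}\sum_{\xi\in\Lambda}\tau(\inpr{\xi}{\xi}_A)=1$ by (\ref{KMS1}), whence $\tau$ is a state. I expect the genuine difficulty to lie in the positivity of $\phi$: the graded pieces $\Psi_k(\cK(E^{\otimes k}))$ are not mutually orthogonal, so one must actually present the fixed-point algebra---equivalently $\cO_E^\gamma$, since $\omega$ factors through $\cO_E$---as an inductive limit of algebras of compact operators on which $\tau$ induces compatible positive functionals, and verify that (\ref{KMS1}) and (\ref{KMS2}) are exactly what makes this work. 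The extra care required because $\Phi$ may be degenerate and $A$ non-unital---so that the nets $\sum_{\xi\in F}T_\xi T_\xi^*$ converge only to proper projections and the connecting maps of the inductive system need not be injective---is the ``easy modification'' of \cite{LN04} alluded to above.
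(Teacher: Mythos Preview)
Your proposal is correct and follows essentially the same approach as the paper, which simply records the forward computation and then defers to an ``easy modification'' of \cite[Theorem 2.1]{LN04}; you have spelled out precisely what that modification entails (injectivity via the KMS relation on monomials, surjectivity via the induced functionals on $\cK(E^{\otimes k})$ with compatibility coming from (\ref{KMS2}) and normalisation from (\ref{KMS1}), and the approximate-unit argument for the final sentence). Your closing paragraph also correctly isolates where the genuine work lies, namely in assembling positivity on the non-orthogonal graded pieces when $\Phi$ is degenerate and $A$ non-unital.
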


\section{The flow of weights}
We keep using the same notation as in the previous section. 
Recall that $\omega$ is a $\beta$-KMS state of infinite type for the gauge action $\gamma$ of the Cuntz-Toeplitz algebra $\cT_E$, or rather 
the Cuntz-Pimsner algebra $\cO_E$. 
For simplicity, we use the notation $S_\xi=\pi_\omega(T_\xi)$ for $\xi\in E$, $\pi(a)=\pi_\omega(\Pi(a))$ for $a\in A$, and $\varphi=\oo$, which is 
the weakly continuous extension of $\omega$ to the weak closure $M$ of $\cT_E$ in the GNS representation for $\omega$. 
The modular automorphism group $\{\sigma^\varphi_t\}_{t\in \R}$ is the weakly continuous extension of $\{\gamma_{-\beta t}\}_{t\in \R}$ to $M$. 
Since $\omega$ is of infinite type, we have the convergence 
\begin{equation}\label{E1}
s\mathchar`-\sum_{\xi\in \Lambda}S_\xi S_\xi^*=1,
\end{equation}
in the strong operator topology. 
For a von Neumann algebra $N$, we denote by $\fZ(N)$ the center of $N$. 

Let $\widehat{\sigma^\varphi}$ be the dual action of the modular automorphism group $\{\sigma^\varphi_t\}_{t\in \R}$, 
acting on the crossed product $M\rtimes_{\sigma^\varphi}\R$. 
Then the (smooth) flow of weights for $M$ is the point realization of the restriction of $\widehat{\sigma^\varphi}_t$ 
to the center $\fZ(M\rtimes_{\sigma^\varphi}\R)$, which depends only on the isomorphism class of $M$ (see \cite{CT77}). 
The flow is ergodic if and only if $M$ is a factor, and it is dissipative if and only if $M$ is semi-finite. 

In our particular case, the modular automorphism group $\{\sigma^\varphi_t\}_{t\in \R}$ is periodic with period $T:=2\pi/|\beta|$, 
and we have a little simpler description of the flow. 
We set 
$$\tM=M\rtimes_{\sigma^\varphi}\R/T\R,$$ 
which is a von Neumann algebra generated by $M$ and the implementing 
unitary representation $\{\lambda(t)\}_{t\in \R/T\R}$ of the group $\R/T\R$. 
Let $\theta$ be the dual action, which is given by $\theta(x)=x$ for 
$x\in M$ and $\theta(\lambda(t))=e^{-|\beta| t\sqrt{-1}}\lambda(t)$.  
Since $\fZ(\tM)$ is an abelian von Neumann algebra, we can identify it with $L^\infty(\fX_M,m_M)$ 
for a measure space $(\fX,m_M)$, and $\theta|_{\fZ(\tM)}$ is realized by a non-singular transformation 
$\fT_M$ on $(\fX_M,m_M)$ as $\theta(f)(x)=f(\fT_M^{-1}x)$. 

\begin{definition} We call $\fT_M$ the reduced flow of weights for $M$. 
\end{definition}

We often abuse the terminology, and call the pair $(\fZ(\tM),\theta|_{\fZ(\tM)})$ the reduced flow of weights too. 
It is easy to show the following (see \cite[Section 10]{T73}). 

\begin{lemma} The flow of weights for $M$ is a suspension flow over a base transformation $\fT_M$ 
with a constant ceiling function $|\beta|$. 
More precisely, the flow space is $\fX\times [0,|\beta|)$ and the flow is given by 
$$t\cdot (x,s)=(\fT_M^n x,s+t-|\beta|n),$$
where $n$ is an integer satisfying $0\leq s+ t-n|\beta|<|\beta|$.  

In consequence, 
\begin{itemize}
\item[(1)] $\fT_M$ is ergodic if and only if $M$ is a factor.
\item[(2)] $\fT_M$ is dissipative if and only if $M$ is semi-finite. 
\item[(3)] $\fT_M$ has a finite invariant measure if and only if the flow of weights for $M$ has a finite invariant measure.  
\end{itemize}
\end{lemma}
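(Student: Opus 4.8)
The plan is to recognize this as a standard fact about periodic flows and their suspension/cross-section correspondence, and to deduce everything from the known structure of the crossed product $M\rtimes_{\sigma^\varphi}\R$ together with the quotient construction $\tM=M\rtimes_{\sigma^\varphi}\R/T\R$. First I would note that because $\sigma^\varphi$ has period $T=2\pi/|\beta|$, the full crossed product decomposes as an integral over the dual: the $\R$-action by translation restricted to the center of $M\rtimes_{\sigma^\varphi}\R$ is, up to the time rescaling by $|\beta|$, exactly the suspension of the transformation $\fT_M$ on $(\fX_M,m_M)$ with constant ceiling $T$ (equivalently ceiling $|\beta|$ after rescaling). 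Concretely, $\fZ(M\rtimes_{\sigma^\varphi}\R)$ is the $L^\infty$-sections over $\R/T\R$ of the bundle whose fiber is $\fZ(\tM)=L^\infty(\fX_M,m_M)$, with the dual flow $\widehat{\sigma^\varphi}$ acting as translation in the $\R/T\R$-coordinate and, each time one wraps around the circle, applying the monodromy $\theta|_{\fZ(\tM)}=\fT_M$. Identifying $\R/T\R$ with $[0,|\beta|)$ via the rescaling that converts the modular period $T$ into length $|\beta|$ (this is precisely the time scaling built into the relation $\sigma^\varphi_t=\gamma_{-\beta t}$), one reads off the stated formula $t\cdot(x,s)=(\fT_M^n x,\,s+t-|\beta|n)$ with $n$ the unique integer making $0\le s+t-n|\beta|<|\beta|$. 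The reference to \cite[Section 10]{T73} handles exactly this suspension picture for periodic modular groups, so I would cite it for the bundle decomposition rather than rederive it.

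With the suspension description in hand, the three consequences are immediate from the dictionary between a suspension flow and its base. For (1): the flow of weights is ergodic if and only if $M$ is a factor (this is the Connes--Takesaki theorem recalled just above in the excerpt), and a suspension flow with constant ceiling is ergodic if and only if its base transformation is ergodic; hence $\fT_M$ is ergodic $\iff$ $M$ is a factor. For (2): $M$ is semi-finite if and only if its flow of weights is dissipative (i.e.\ the flow is a translation flow, admitting a non-atomic invariant measure with a cross-section on which the return map is the identity, or more precisely the flow space fibers over $\R$ with the flow acting by translation); under the suspension correspondence dissipativity of the flow is equivalent to dissipativity (total dissipativity) of the base transformation $\fT_M$, giving (2). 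For (3): a suspension with constant ceiling $|\beta|$ over $(\fX_M,\fT_M)$ has a finite invariant measure if and only if $\fT_M$ does, since the suspension invariant measure is just $m_M\times(\text{Lebesgue on }[0,|\beta|))$ up to normalization and the ceiling is bounded away from $0$ and $\infty$; this gives (3).

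The only point requiring a little care — and the place I expect the main (very mild) obstacle — is getting the bookkeeping of the time rescaling right, since the modular group runs at speed $|\beta|$ relative to the gauge action (because $\sigma^\varphi_t=\gamma_{-\beta t}$), so the period of $\{\sigma^\varphi_t\}$ is $T=2\pi/|\beta|$ while the natural suspension of $\fT_M$ that matches the dual action $\theta(\lambda(t))=e^{-|\beta|t\sqrt{-1}}\lambda(t)$ has ceiling $|\beta|$. One must check that the identification $M\rtimes_{\sigma^\varphi}\R \cong (\tM\rtimes_\theta \widehat{\R/T\R}) \cong \ldots$ used to pass from $\fZ(M\rtimes_{\sigma^\varphi}\R)$ to the $\fX_M\times[0,|\beta|)$ model carries the dual flow to translation at unit speed on the $[0,|\beta|)$-factor — which is exactly what the normalization in the definition of $\theta$ was chosen to ensure. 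Once this is pinned down, the three items are formal, and I would present them as a one-line deduction each from the standard suspension/cross-section correspondence for non-singular flows and the Connes--Takesaki characterizations of factoriality and semi-finiteness in terms of the flow of weights.
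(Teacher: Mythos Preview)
Your proposal is correct and matches the paper's approach exactly: the paper does not give a proof at all, but simply records the lemma as ``easy to show'' with a reference to \cite[Section 10]{T73}, which is precisely the suspension/cross-section picture for periodic modular automorphism groups that you outline. Your expanded explanation of the bookkeeping (why the ceiling is $|\beta|$ rather than the modular period $T=2\pi/|\beta|$, via the scaling $\htau\circ\theta=e^{-|\beta|}\htau$) and the one-line deductions of (1)--(3) from the standard suspension dictionary are exactly what the paper leaves to the reader.
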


Now out task is to determine the reduced flow of weights for $M$. 
For this, we introduce a Markov operator on the abelian von Neumann algebra $\fZ(B)$. 
A Markov operator acting on a von Neumann algebra is a unital normal completely positive map. 
We denote by $E_{\fZ(B)}$ the $\varphi$-preserving conditional expectation from 
$M$ onto $\fZ(B)$, which uniquely exists because the modular automorphism group of $\varphi$ 
leaves $\fZ(B)$ invariant. 
Thanks to Eq.(\ref{E1}), we can introduce a Markov operator on $\fZ(B)$ by 
$$P(f)=E_{\fZ(B)}(s\mathchar`-\sum_{\xi\in \Lambda}S_\xi f S_\xi^*).$$ 
We take a point realization of $\fZ(B)$, and identify $\fZ(B)$ with $L^\infty(X,\mu)$. 
Since $\varphi$ is faithful, we may assume that the state $\varphi$ restricted to $\fZ(B)$ 
is given by the probability measure $\mu$. 

We briefly recall the basics of the Poisson (and tail) boundaries, which will be used for our 
description of the reduced flow of weights.  
The reader is referred to \cite{Kai92} for a purely measure theoretical construction, and to \cite{I02}, \cite{I12} 
for operator algebraic treatments. 

\begin{definition}
A function $f\in L^\infty(X,\mu)$ is said to be $P$-harmonic if $P(f)=f$. 
We denote by $H^\infty((X,\mu),P)$ the set of $P$-harmonic functions, 
which inherits operator system structure from $L^\infty(X,\mu)$. 

A bounded sequence $\{f_n\}_{n\in \Z}$ in $L^\infty(X,\mu)$ 
is said to be a $P$-harmonic sequence if $P(f_n)=f_{n-1}$. 
We denote by $S^\infty((X,\mu),P)$ the set of bounded harmonic sequences, 
which inherits operator system structure from $L^\infty(X,\mu)\otimes \ell^\infty(\Z)$. 
\end{definition} 

It is known that the operator system $H^\infty((X,\mu),P)$ has a commutative von Neumann 
algebra structure with a new product given by 
$$s\mathchar`-\lim_{n\to \infty}P^n(fg),$$
and the point realization of this abelian von Neumann algebra is said to be the Poisson boundary for $P$, 
and denoted by $(\overline{\Omega},\overline{\nu})$. 
Since the map 
$$H^\infty((X,\mu),P)\ni f\mapsto \mu(f)\in \C$$ 
gives rise to a faithful normal state as a von Neumann algebra, 
we may assume that the measure $\overline{\nu}$ comes from this state. 

Let $\tP$ be a Markov operator on $L^\infty(X\times \Z,\mu\times m)$ 
defined by 
$$\tP(f)(x,n)=P(f(\cdot,n+1))(x),\quad x\in X,$$
where $m$ is the counting measure on $\Z$. 
Then the space $S^\infty((X,\mu),P)$ is identified with $H^\infty((X\times \Z,\mu\times m),\tP)$. 
The Poisson boundary for $\tP$ is called the tail boundary for $P$ and denoted by $(\Omega,\nu)$. 
Since $P$ is a faithful normal positive map, the map $(f_n)\mapsto \mu(f_0)$ 
gives a faithful normal state on $L^\infty(\Omega,\nu)$. 
Thus we may assume that the map $(f_n)\mapsto \mu(f_0)$ corresponds 
to the measure $\nu$ as before. 
Let $\Sigma$ be the translation operator on $L^\infty(X\times \Z,\mu\times m)$ defined by 
$$\Sigma(f)(x,n)=f(x,n-1).$$
Then $\Sigma$ induces an automorphism of $L^\infty(\Omega,\nu)$, which is also denoted by $\Sigma$ for simplicity. 
Note that $\Sigma$ is $\nu$-preserving if $\mu\cdot P=P$ holds. 
The Poisson boundary $(\overline{\Omega},\overline{\nu})$ 
can be identified with the factor space of the tail boundary 
$(\Omega,\nu)$ by the translation $\Sigma$, namely 
$$L^\infty(\overline{\Omega},\overline{\nu})=L^\infty(\Omega,\nu)^\Sigma,$$
where the right-hand side means the set of functions in $L^\infty(\Omega,\nu)$ fixed by $\Sigma$. 

\begin{remark} For the definition of the tail boundary, it does not matter whether we consider 
one-sided sequences or two sided-sequences because they give isomorphic operator systems. 
\end{remark}

\begin{theorem}\label{main}
The reduced flow of weights $(\fZ(\tM),\theta|_{\fZ(\tM)})$ is identified with the tail boundary for $P$ with 
the time translation $(L^\infty(\Omega,\nu),\Sigma)$ for positive $\beta$, and with $\Sigma^{-1}$ instead of 
$\Sigma$ for negative $\beta$.  
The center $\fZ(M)$ of $M$ is identified with the Poisson boundary  
$L^\infty(\overline{\Omega},\overline{\nu})=L^\infty(\Omega,\nu)^\Sigma$ for $P$. 
\end{theorem}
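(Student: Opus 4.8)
The plan is to identify the crossed product $\tM = M \rtimes_{\sigma^\varphi} \R/T\R$ with a concrete von Neumann algebra built from the correspondence, on which the dual action $\theta$ is transparent, and then compute its center. First I would analyze the fixed point algebra $M^{\R/T\R}$ under the gauge-induced action. Since $\sigma^\varphi_t = \gamma_{-\beta t}$ weakly extended, the fixed-point algebra is the weak closure of the ``core'' generated by $S_\mu S_\nu^*$ with $|\mu| = |\nu|$ (words of equal length in the $T_\xi$'s), and the conditional expectation onto it is the standard gauge-averaging one. The key structural fact I would establish is that conjugation $x \mapsto s\text{-}\sum_{\xi \in \Lambda} S_\xi x S_\xi^*$ implements, on the fixed-point algebra, a shift that matches the Markov operator $P$ after restriction to $\fZ(B)$; more precisely, the normalized expectation $E_{\fZ(B)}$ intertwines this conjugation with $P$ by construction, and iterating gives $P^n$ on the length-$n$ pieces.

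Next I would set up the crossed product. Because the modular group is periodic of period $T$, $\tM$ is generated by $M$ and a single unitary $\lambda = \lambda(T/k)$-type family, or more cleanly by $M$ together with $\{\lambda(t)\}_{t\in\R/T\R}$ with $\lambda(t) S_\xi \lambda(t)^* = e^{-\beta t \sqrt{-1}} S_\xi$ (reading off from $\sigma^\varphi_t = \gamma_{-\beta t}$). The dual action is $\theta(\lambda(t)) = e^{-|\beta|t\sqrt{-1}}\lambda(t)$, $\theta|_M = \id$. I would then show $\fZ(\tM) \subseteq M^{\R/T\R}$: any central element commutes with all $\lambda(t)$, hence is gauge-invariant, and commutes with $M$, hence lies in $\fZ(M^{\R/T\R})$; conversely one checks which gauge-invariant central elements of the core survive. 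The central elements of the core that also commute with the shift $x \mapsto \sum S_\xi x S_\xi^*$ are exactly the $P$-harmonic functions on $\fZ(B)$ (for $\fZ(M)$) resp., after incorporating the $\Z$-grading by word length that the variable $\lambda$ records, the $\tP$-harmonic sequences (for $\fZ(\tM)$). This is where the tail boundary $H^\infty((X\times\Z,\mu\times m),\tP) = S^\infty((X,\mu),P)$ enters, and where $\Sigma$ (shift in the word-length index $n$) appears as the restriction of $\theta(\lambda)$; the sign flip for negative $\beta$ comes from the $e^{-|\beta|t\sqrt{-1}}$ versus $e^{-\beta t \sqrt{-1}}$ discrepancy, which reverses the grading direction. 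Finally, $\fZ(M) = \fZ(\tM)^\theta = $ fixed points of $\Sigma$ in the tail boundary $= L^\infty(\overline\Omega,\overline\nu)$, matching the stated identification of the Poisson boundary.

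The main obstacle I anticipate is the precise identification of $\fZ(\tM)$ with the full space of bounded $\tP$-harmonic sequences rather than merely an inclusion in one direction. One direction — producing central elements of $\tM$ from harmonic sequences — requires checking both that a harmonic sequence $(f_n)$, viewed suitably in the core graded by word length, commutes with every $S_\xi$ (using the harmonicity $P(f_n) = f_{n-1}$ together with (\ref{E1})), and that it commutes with $\lambda(t)$. The reverse direction — showing every central element arises this way — needs the faithfulness of $\varphi$ and a Fourier-expansion argument along the $\R/T\R$-grading to force a central element into the core, followed by showing its components form a harmonic sequence because centrality forces commutation with the $S_\xi$. Handling the case where $\tau$ is not a state (the extra summand $\C p_\infty$ in $B$, with $p_\infty$ central and killed by every $S_\xi$) requires care: $p_\infty$ contributes a fixed atom, and one must verify $P$ acts compatibly so that the boundary picture still holds. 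I would also need the elementary identifications already recorded in the excerpt (that $\{\sigma^\varphi_t\}$ leaves $\fZ(B)$ invariant so $E_{\fZ(B)}$ exists, and the suspension-flow description) to translate the computation of $(\fZ(\tM),\theta)$ into the asserted statement about the reduced flow of weights.
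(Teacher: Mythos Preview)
Your outline captures the overall shape --- decompose $\fZ(\tM)$ along the spectral projections $e_n$ of $\lambda$, obtain a sequence indexed by $n\in\Z$, and relate it to $P$-harmonic sequences --- but the crucial direction, producing a central element of $\tM$ from a harmonic sequence $(f_n)\in S^\infty((X,\mu),P)$, does not work the way you describe. A central element $z\in\fZ(\tM)$ has the form $z=\sum_n g_n e_n$ with $g_n\in\fZ(M_\varphi)$ satisfying the \emph{exact} intertwining relation $g_nS_\xi=S_\xi g_{n+1}$ for every $\xi$. The harmonicity condition $P(f_{n+1})=f_n$, by contrast, says only that $E_{\fZ(B)}\bigl(\sum_\xi S_\xi f_{n+1}S_\xi^*\bigr)=f_n$, which is far weaker: the element $\sum_n f_n e_n$ built directly from $(f_n)\subset\fZ(B)$ is essentially never central. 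The passage from $(f_n)$ in $\fZ(B)$ to a suitable $(g_n)$ in the larger algebra $\fZ(M_\varphi)$ is not a matter of ``viewing suitably and checking commutation''; it is the substantive part of the argument, and your proposal contains no mechanism for it.

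The paper's route avoids this obstacle by not trying to write down the central element directly. Instead it exploits the order isomorphism between $\fZ(\tM)_+$ and the cone of normal semi-finite traces on $\tM$ dominated by (multiples of) the canonical trace $\htau$. Such a trace $\psi$ is determined by its restrictions $x\mapsto\psi(xe_n)$ to $B$, which yield a harmonic sequence $(f_n)$; this gives an injective order-preserving map $\rho:\fZ(\tM)_+\to L^\infty(\Omega,\nu)_+$. Surjectivity is then proved by constructing, for a given $(f_n)$, a trace $\psi_n$ on $M_\varphi$ as an ultrafilter limit of the functionals $x\mapsto\varphi\bigl(f_{n+k}\,Q_{F_1}\circ\cdots\circ Q_{F_k}(x)\bigr)$, verifying it is a trace via the explicit formula on $\cF^0$, extracting $g_n\in\fZ(M_\varphi)$ from $\psi_n=\varphi(g_n\,\cdot\,)$, and finally checking $g_nS_\xi=S_\xi g_{n+1}$. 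This trace-theoretic detour, together with the duality $\varphi(P(f)g)=\lim_F\varphi(fQ_F(g))$, is the missing idea in your plan.
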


We prove Theorem \ref{main} in several steps. 
We first give a characterization of the Markov operator $P$, which also provides a way to 
compute $P$ in concrete examples. 
For any finite subset $F\Subset \Lambda$, we introduce a normal completely positive map $Q_F$ acting on $M$ by 
$$Q_F(x)=e^{-\beta}\sum_{\xi\in F}S_\xi^*xS_\xi,$$
which globally preserves $B$. 
The KMS condition implies $\varphi\circ Q_F\leq \varphi$. 
It would be very convenient for us if we could introduce $Q$ by the limit of the net $\{Q_F\}_{F\Subset \Lambda}$, 
which would play the role of the adjoint of $P$, though a priori there is no reason 
for the desired limit to exist. 
Note that for $a\in A$, we have 
$$Q_F(\pi(a))=e^{-\beta}\sum_{\xi\in F}\pi(\inpr{\xi}{\Phi(a)\xi}_A).$$
When the net $\{\sum_{\xi\in F}\pi(\inpr{\xi}{\xi}_A)\}_{F\Subset \Lambda}$ is bounded, 
we can directly define 
$$Q(x)= e^{-\beta}\;s\mathchar`-\sum_{\xi\in \Lambda}S_\xi^*xS_\xi.$$
Thanks to (\ref{KMS2}), we have $\varphi \circ Q=\varphi$. 

\begin{lemma}\label{trans} For any $f\in \fZ(B)$ and $g\in B$, we have 
$$\varphi(P(f)g)=\lim_{F\Subset \Lambda} \varphi(fQ_F(g)).$$
In particular, when $Q$ is well-defined, we have 
$$\varphi(P(f)g)=\varphi(fQ(g)).$$
\end{lemma}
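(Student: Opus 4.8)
The plan is to prove the first identity $\varphi(P(f)g)=\lim_{F\Subset \Lambda}\varphi(fQ_F(g))$ directly from the definitions, unwinding $P$ through the conditional expectation $E_{\fZ(B)}$, and then to obtain the special case by a standard convergence argument. First I would write out
$$\varphi(P(f)g)=\varphi\bigl(E_{\fZ(B)}(s\mathchar`-\textstyle\sum_{\xi\in \Lambda}S_\xi f S_\xi^*)\,g\bigr).$$
Since $g\in B$ and hence $g$ commutes with $\fZ(B)$ only after we further project, the first step is to move $g$ inside using the $\varphi$-invariance of $E_{\fZ(B)}$: for any $x\in M$ and $g\in B$ one has $\varphi(E_{\fZ(B)}(x)g)=\varphi(x\,E_{\fZ(B)}(g))$ by the module property and $\varphi$-preservation of the conditional expectation. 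So the left side becomes $\varphi\bigl((s\mathchar`-\sum_{\xi\in\Lambda}S_\xi f S_\xi^*)\,E_{\fZ(B)}(g)\bigr)$, and since the sum converges strongly and $\varphi$ is normal this equals $\lim_{F\Subset\Lambda}\sum_{\xi\in F}\varphi(S_\xi f S_\xi^* E_{\fZ(B)}(g))$.

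Next I would bring the factors $S_\xi^*(\cdot)S_\xi$ to the other side using the KMS condition. The key computation is that for $x,y\in M$,
$$\varphi(S_\xi x S_\xi^* y)=e^{-\beta}\,\varphi(x S_\xi^* y S_\xi),$$
which follows from the KMS identity for $\varphi$ together with $\sigma^\varphi_{i}(S_\xi)=e^{-\beta}S_\xi$ (recall $\sigma^\varphi_t$ extends $\gamma_{-\beta t}$, so $\sigma^\varphi_t(S_\xi)=e^{-\beta t\sqrt{-1}}S_\xi$, whence the analytic continuation at $t=i$ gives the factor $e^{-\beta}$). Applying this with $x=f$ and $y=E_{\fZ(B)}(g)$ turns each summand into $e^{-\beta}\varphi(f\,S_\xi^* E_{\fZ(B)}(g) S_\xi)$. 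Then I would use that $f\in\fZ(B)$ commutes with $E_{\fZ(B)}(g)\in\fZ(B)$ and, more importantly, pull $E_{\fZ(B)}$ back out: $\varphi(f S_\xi^* E_{\fZ(B)}(g) S_\xi)=\varphi(E_{\fZ(B)}(f S_\xi^* g S_\xi))$ — here I need $f\in\fZ(B)$ so that $f$ can be placed outside the expectation, and that $S_\xi^* g S_\xi\in B$ since $Q_F$ globally preserves $B$ — giving $\varphi(f S_\xi^* g S_\xi)$. Summing over $\xi\in F$ and multiplying by $e^{-\beta}$ recovers exactly $\varphi(f Q_F(g))$, and taking the limit over $F\Subset\Lambda$ yields the claim.

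For the "in particular" clause, when $Q$ is well-defined the net $\{\sum_{\xi\in F}\pi(\inpr{\xi}{\xi}_A)\}_{F\Subset\Lambda}$ is bounded, so $\{Q_F(g)\}$ converges strongly (hence $\sigma$-weakly) to $Q(g)$ for $g\in B$; since $\varphi$ is normal and $f$ is fixed, $\lim_F\varphi(fQ_F(g))=\varphi(fQ(g))$, which is the second identity. The main obstacle I anticipate is the bookkeeping around where exactly $E_{\fZ(B)}$ can be inserted and removed: the identity $\varphi(E_{\fZ(B)}(x)g)=\varphi(x E_{\fZ(B)}(g))$ requires $g\in B$ (not merely $g\in M$), and the step $\varphi(f S_\xi^* E_{\fZ(B)}(g) S_\xi)=\varphi(f S_\xi^* g S_\xi)$ uses $f\in\fZ(B)$ together with $E_{\fZ(B)}$-invariance of $\varphi$ and the fact that $S_\xi^*gS_\xi\in B$; getting these compatibility conditions lined up (rather than any hard analysis) is the delicate point. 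A secondary technical care is justifying the interchange of the strong limit $\sum_{\xi\in\Lambda}$ with $\varphi$, which is immediate from normality of $\varphi$ once the partial sums are recognized as an increasing-then-dominated net in the sense needed.
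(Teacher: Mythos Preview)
Your argument correctly gets you to
\[
\varphi(P(f)g)=\lim_{F\Subset\Lambda}\sum_{\xi\in F}e^{-\beta}\varphi\bigl(f\,S_\xi^*\,E_{\fZ(B)}(g)\,S_\xi\bigr)
=\lim_{F\Subset\Lambda}\varphi\bigl(f\,Q_F(E_{\fZ(B)}(g))\bigr),
\]
but the step replacing $E_{\fZ(B)}(g)$ by $g$ is not justified. You write
\(\varphi(f S_\xi^* E_{\fZ(B)}(g) S_\xi)=\varphi(E_{\fZ(B)}(f S_\xi^* g S_\xi))\),
yet nothing in your explanation establishes this: knowing $f\in\fZ(B)$ and $S_\xi^* g S_\xi\in B$ only gives $\varphi(E_{\fZ(B)}(f S_\xi^* g S_\xi))=\varphi(f S_\xi^* g S_\xi)$, not the equality with the left-hand side. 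Unwinding via KMS, your claimed identity is equivalent to $E_B(S_\xi f S_\xi^*)\in\fZ(B)$ for each individual $\xi$ (so that $\varphi(E_{\fZ(B)}(S_\xi f S_\xi^*)g)=\varphi(S_\xi f S_\xi^* g)$ for all $g\in B$), and there is no reason this holds termwise. What \emph{is} true---and what is actually needed---is that the full sum $E_B\bigl(\sum_{\xi\in\Lambda}S_\xi f S_\xi^*\bigr)$ lies in $\fZ(B)$, but that is a nontrivial statement.

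The paper supplies exactly the missing ingredient: it first shows (following \cite[Theorem 1.1]{LN04}) that for $0\le f\le 1$ in $\fZ(B_0)$ the functional $g\mapsto\lim_F\varphi(fQ_F(g))$ is a \emph{trace} on $B$ dominated by $\varphi|_B$, hence of the form $\varphi(h\,\cdot)$ for a unique $h\in\fZ(B)$. Then the KMS computation $\varphi(fQ_F(g))=\sum_{\xi\in F}\varphi(E_B(S_\xi f S_\xi^*)g)$ identifies $h$ with $E_B(\sum_\xi S_\xi f S_\xi^*)$, forcing the latter into $\fZ(B)$ and hence equal to $P(f)$. The paper also treats the piece $f=p_\infty$ separately. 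So your overall strategy (KMS plus conditional expectations) is the right one, but the passage from $E_{\fZ(B)}$ back to $E_B$ hinges on this trace argument, which your proposal omits; as written, your proof only yields the lemma for $g\in\fZ(B)$.
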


\begin{proof} Recall that $p_\infty$ is a central projection in $B$ defined by (\ref{p-infty}), 
which is minimal in $B$ if it is not zero. 
Since $\{T_\xi\cdot a_i\}_{i\in I}$ converges to $T_\xi$ for an approximate unit $\{a_i\}_{i\in I}$ of $A$, 
we have $S_\xi p_\infty=0$, and 
$$P(p_\infty)=E_{\fZ(B)}(s\mathchar`-\sum_{\xi\in \Lambda}(S_\xi p_\infty S_\xi^*))=0,$$
$$\varphi(p_\infty Q_F(g))=\sum_{\xi\in F}\varphi(p_\infty S_\xi^* g S_\xi)=0.$$
Thus the statement for $f=p_\infty$ holds. 

Now assume $f\in \fZ(B_0)$ with $0\leq f\leq 1$. 
In a similar way as in \cite[Theorem 1.1]{LN04}, we can show that the map 
$$B\ni g\mapsto \lim_{F\Subset \Lambda}\varphi(f Q_F(g))$$ is a trace of $B$ dominated by $\varphi|_B$, 
and there exists a unique element $h\in \fZ(B)$ satisfying  
$$\lim_{F\Subset \Lambda}\varphi (fQ_F(g))=\varphi(hg)$$ 
for all $g\in B$. 
On the other hand, the KMS condition implies  
$$\varphi(fQ_F(g))=e^{-\beta}\sum_{\xi\in F}\varphi(fS_\xi^*gS_\xi)
=\sum_{\xi\in F}\varphi(S_\xi fS_\xi^*g)
=\sum_{\xi\in F}\varphi(E_B(S_\xi fS_\xi^*)g),$$
where $E_B$ is the normal $\varphi$-preserving conditional expectation from $M$ onto $B$. 
This implies 
$$\varphi(hg)=\lim_{F\Subset \Lambda}\varphi(fQ_F(g))
=\lim_{F\Subset \Lambda}\sum_{\xi\in F}\varphi(E_B(S_\xi fS_\xi^*)g)
=\varphi(E_B(s\mathchar`-\sum_{\xi\in \Lambda}S_\xi f S_\xi^*)g),$$
and we get $P(f)=h$. 
Thus the statement holds. 
\end{proof}

Let 
$$\lambda(t)=\sum_{n\in \Z}e^{n\beta t\sqrt{-1}}e_n$$
be the spectral decomposition of $\{\lambda(t)\}_{t\in \R/T\R}$. 
The relation $\lambda(t)S_\xi=e^{-\beta t\sqrt{-1}}S_\xi\lambda(t)$ for $\xi\in E$ implies 
\begin{equation}\label{E2}
e_nS_\xi=S_\xi e_{n+1},\quad \xi\in E.
\end{equation}
We set 
$$h=\sum_{n\in \Z}e^{n\beta}e_n,$$
which is a positive non-singular operator affiliated with $\tM$.  
Let $\hat{\varphi}$ be the dual weight of $\varphi$ defined by
$$\hat{\varphi}(x)=\varphi(\sum_{n\in \Z}\theta^n(x)),\quad x\in \tM_+,$$ 
and let $\htau=\hat{\varphi}(h^{-1}\cdot)$. 
Then $\htau$ is a faithful normal semi-finite trace satisfying 
$\htau\circ\theta=e^{-|\beta|}\htau$ and
\begin{equation}\label{E3}
\htau(xe_n)=\varphi(x)e^{-n\beta},\quad x\in M. 
\end{equation} 

To determine the structure of $\fZ(\tM)$, we investigate 
the set of normal semi-finite traces on $\tM$ 
dominated by a multiple of $\htau$ because the latter set 
completely determines the order structure of $\fZ(\tM)_+$. 

Let $\psi$ be a normal semi-finite trace dominated by $\htau$. 
Since $\psi(e_n)\leq \htau(e_n)<\infty$, for every $x\in \tM_+$, 
we have 
$$\psi(x)=\sum_{n\in \Z}\psi(x^{1/2}e_nx^{1/2})
=\sum_{n\in \Z}\psi(e_nxe_n).$$
It is straightforward to show $e_n\tM e_n=M_\varphi e_n$, 
and so $\psi$ is determined by the family of finite traces 
$M_\varphi\ni x\mapsto \psi(xe_n)$ for $n\in \Z$. 

For $\xi=(\xi_1,\xi_2,\ldots,\xi_n)\in E^n$, we set $S_\xi=S_{\xi_1}S_{\xi_2}\cdots S_{\xi_n}$. 
Let $\cF_k^0$ be the liner span of $S_\xi S_\eta^*$ for all 
$\xi,\eta\in E^k$, with convention $\cF_0^0=\pi(A)$, 
and let $\cF^0=\cup_{k=0}^\infty \cF_k^0$. 
Then $\cF^0$ is a dense $*$-subalgebra of $M_\varphi$.  
Thanks to Eq.(\ref{E1}), we have inclusion 
${\cF_k^0}''\subset {\cF_{k+1}^0}''$. 
 
For $\xi,\eta\in E^k$,
$$\psi(S_\xi S_\eta^*e_n)=\psi(S_\eta^*e_n S_\xi)
=\psi(S_\eta^*S_\xi e_{n+k}).$$
Thus $\psi$ is completely determined by the family of finite traces 
$B \ni x\mapsto \psi(xe_n)$, which is dominated by $e^{-n\beta}\varphi|_B$.  
There exists a unique element $f_n\in \fZ(B)$ with 
$0\leq f_n\leq 1$ satisfying 
$$\psi(xe_n)=\varphi(f_nx)e^{-n\beta},\quad x\in B.$$

\begin{lemma} The sequence $(f_n)$ belongs to $S^\infty((X,\mu),P)$. 
\end{lemma}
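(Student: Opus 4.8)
The plan is to show that $(f_n)$ is bounded and satisfies $P(f_n) = f_{n-1}$, the two defining properties of an element of $S^\infty((X,\mu),P)$. Boundedness is already in hand: each $f_n$ satisfies $0 \le f_n \le 1$ by construction, so $\|f_n\|_\infty \le 1$ uniformly. Hence the entire content is the harmonicity relation $P(f_n) = f_{n-1}$, which I expect to follow by testing both sides against an arbitrary $g \in \fZ(B)$ (indeed against an arbitrary $g \in B$) using the faithfulness of $\varphi$, and then comparing with the trace identity defining $f_n$.

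Concretely, I would start from the characterization in Lemma \ref{trans}: for $f \in \fZ(B)$ and $g \in B$,
$$\varphi(P(f_n)g) = \lim_{F\Subset\Lambda}\varphi(f_n Q_F(g)) = \lim_{F\Subset\Lambda} e^{-\beta}\sum_{\xi\in F}\varphi(f_n S_\xi^* g S_\xi).$$
Next I would use the defining equation $\psi(x e_{n+1}) = \varphi(f_{n+1} x)e^{-(n+1)\beta}$ for $x \in B$ together with the trace property of $\psi$ and the relation $e_n S_\xi = S_\xi e_{n+1}$ from Eq.(\ref{E2}) to rewrite the terms $\varphi(f_n S_\xi^* g S_\xi)$. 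More precisely, for $\xi \in E = E^1$ and $g \in B$, the computation $\psi(S_\xi^* g S_\xi e_{n+1}) = \psi(g S_\xi e_{n+1} S_\xi^*) = \psi(g e_n S_\xi S_\xi^*)$ connects a trace against $e_{n+1}$ to one against $e_n$, via $S_\xi S_\xi^*$. Summing over $\xi \in \Lambda$ and invoking Eq.(\ref{E1}) (the strong convergence $\sum_\xi S_\xi S_\xi^* = 1$) will collapse $\sum_\xi \psi(g e_n S_\xi S_\xi^*)$ to $\psi(g e_n)$, so that
$$\sum_{\xi\in\Lambda} e^{-(n+1)\beta}\varphi(f_{n+1}\inpr{\xi}{\Phi(g')\xi}_A)$$-type expressions reorganize into $e^{-n\beta}\varphi(f_n g)$ on one side and the $P$-transform on the other.

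Carrying this out, the chain of equalities should read: for $g \in \fZ(B)$ (which is enough to identify elements of $\fZ(B)$ by faithfulness of $\varphi$),
$$\varphi(P(f_n)g)e^{-n\beta} = \lim_F e^{-\beta}\sum_{\xi\in F}\varphi(f_n S_\xi^* g S_\xi)e^{-n\beta} = \lim_F \sum_{\xi\in F}\psi(f_n S_\xi^* g S_\xi e_{n+1}),$$
then move $e_{n+1}$ through $S_\xi$, use traciality of $\psi$ and $f_n \in \fZ(B)$ to bring everything next to $e_n$, apply Eq.(\ref{E1}) inside $\psi$ to remove $\sum_\xi S_\xi S_\xi^*$, and arrive at $\psi(f_n g e_n) = \varphi(f_{n-1} \cdot)$... wait — one must be careful with the index bookkeeping: pushing $e_{n+1}$ left through a single $S_\xi$ produces $e_n$, which is the index appearing in $f_n$'s defining relation, so the output is $\psi(g e_n) $-type terms governed by $f_n$, and comparing with $\varphi(f_{n-1}g)e^{-(n-1)\beta}$ requires tracking one more shift. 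The main obstacle I anticipate is precisely this index alignment together with the interchange of the limit over finite $F \Subset \Lambda$ with the (strong-topology) sum in Eq.(\ref{E1}): one must justify that $\psi$, being normal, respects the strong convergence $\sum_{\xi \in F} S_\xi S_\xi^* \to 1$, and that the partial sums stay dominated so the limits may be exchanged. Once the normality of $\psi$ and the domination $\psi \le \htau$ are used to control these limits, the identity $P(f_n) = f_{n-1}$ drops out, completing the proof that $(f_n) \in S^\infty((X,\mu),P)$.
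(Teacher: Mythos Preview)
Your plan uses exactly the same ingredients as the paper's proof: the trace property of $\psi$, the relation $e_nS_\xi=S_\xi e_{n+1}$, the strong convergence $\sum_\xi S_\xi S_\xi^*=1$, and Lemma~\ref{trans}. The paper simply runs the computation in the opposite, cleaner direction: starting from $\psi(xe_n)$, inserting $\sum_\xi S_\xi S_\xi^*$ via Eq.~(\ref{E1}), cycling with traciality to get $\sum_\xi\psi(S_\xi^* x S_\xi e_{n+1})=e^\beta\lim_F\psi(Q_F(x)e_{n+1})$, and then applying the defining relation for $f_{n+1}$ and Lemma~\ref{trans} to obtain $\varphi(P(f_{n+1})x)e^{-n\beta}$, whence $P(f_{n+1})=f_n$.

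Your self-diagnosed index confusion is real: in your displayed chain, $\varphi(f_n S_\xi^* g S_\xi)e^{-n\beta}$ equals $\psi(S_\xi^* g S_\xi\, e_n)$, not $\psi(f_n S_\xi^* g S_\xi\, e_{n+1})$ (the extra $f_n$ is spurious and the index should be $n$). After cycling and using $S_\xi e_n=e_{n-1}S_\xi$, summing over $\xi$ gives $\psi(g e_{n-1})$, and the factor $e^{-\beta}$ you carried then cancels the index shift to yield $\varphi(P(f_n)g)=\varphi(f_{n-1}g)$. So your route does close up; it is just the paper's argument read backwards with a couple of bookkeeping slips.
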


\begin{proof}
Thanks to (\ref{E1}), for any $x\in B$ we have
\begin{eqnarray*}\psi(xe_n)&=&
\sum_{\xi\in \Lambda}^\infty\psi(S_\xi S_\xi^*xe_n)
=\sum_{\xi\in \Lambda}^\infty\psi(S_\xi^*xe_nS_\xi)
=\sum_{\xi\in \Lambda}^\infty\psi(S_\xi^*x S_\xi e_{n+1})\\
&=&e^\beta\lim_{F\Subset \Lambda}\psi(Q_F(x)e_{n+1})
=\lim_{F\Subset \Lambda}\varphi(f_{n+1}Q_F(x))e^{-n\beta}\\
&=&\varphi(P(f_{n+1})x)e^{-n\beta},
\end{eqnarray*}
which implies $P(f_{n+1})=f_n$. 
\end{proof}

Note that the above correspondence $\psi\mapsto (f_n)$ gives rise to 
an injective order-preserving affine map 
$$\rho:\fZ(\tM)_+\rightarrow L^\infty(\Omega,\nu)_+.$$
Indeed, starting from $z\in \fZ(\tM)_+$, we can construct a normal semi-finite trace $\psi$ of $\tM$ by 
$\psi(x)=\htau(zx)$, which is dominated by a multiple of $\htau$. 
Applying the above argument to $\psi$, we get $(f_n)\in S^\infty((X,\mu),P)$. 
We define $\rho(z)$ to be the element in $L^\infty(\Omega,\nu)$ corresponding to $(f_n)\in S^\infty((X,\mu),P)$. 

By construction, we have $\rho(1)=1$. 
Note that $\theta(e_n)=e_{n+\epsilon}$ holds with $\epsilon=\beta/|\beta|$. 
For $z\in \fZ(\tM)_+$ with $\htau(zxe_n)=\varphi(xf_n)e^{-n\beta}$, we have 
\begin{align*}
\lefteqn{\htau(\theta(z)xe_n)=e^{-|\beta|}\htau(x\theta^{-1}(e_n))=e^{-|\beta|}\htau(xe_{n-\epsilon})} \\
 &=e^{-|\beta|}\varphi(xf_{n-\epsilon})e^{-(n-\epsilon)\beta}=\varphi(xf_{n-\epsilon})e^{-n\beta}.
\end{align*}
Thus $\rho$ satisfies $\rho\circ \theta=\Sigma\circ\rho$ for positive $\beta$ and 
$\rho\circ \theta=\Sigma^{-1}\circ\rho$ for negative $\beta$. 

Now Theorem \ref{main} follows from the next lemma. 

\begin{lemma} Let $f=(f_n)$ be a bounded harmonic sequence 
for $P$ with $0\leq f_n\leq 1$ for all $n\in \Z$. 
Then there exists a unique normal semi-finite trace 
$\psi$ on $\tM$ dominated by $\htau$ satisfying 
$$\psi(xe_n)=\varphi(f_nx)e^{-n\beta},\quad x\in B.$$
The correspondence $(f_n)\mapsto \psi$ is order-preserving. 
\end{lemma}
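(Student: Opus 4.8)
The plan is to invert the construction $\psi\mapsto (f_n)$ carried out in the preceding paragraphs. Given a bounded $P$-harmonic sequence $f=(f_n)$ with $0\le f_n\le 1$, I would first define, for each $n\in\Z$, a positive normal functional on $B$ by $\omega_n(x)=\varphi(f_nx)e^{-n\beta}$; since $f_n\in\fZ(B)$ and $\varphi|_B$ is tracial (as $B\subset M_\varphi$), each $\omega_n$ is a finite trace on $B$ dominated by $e^{-n\beta}\varphi|_B$. Using the identification $e_n\tM e_n=M_\varphi e_n$ together with the relation $\psi(S_\xi S_\eta^* e_n)=\psi(S_\eta^* S_\xi e_{n+k})$ for $\xi,\eta\in E^k$ that was established above, I would then \emph{define} $\psi$ on the dense subalgebra spanned by the $\cF^0_k e_n$ by the only possible formula consistent with the target identities, namely $\psi(S_\xi S_\eta^* e_n)=\varphi(f_{n+k}\,S_\eta^*S_\xi)e^{-(n+k)\beta}$ for $\xi,\eta\in E^k$, and extend by $\psi(x)=\sum_{n}\psi(e_nxe_n)$.

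The main work is to check that this formula is well defined and yields a normal semi-finite trace dominated by $\htau$. Well-definedness (independence of the way an element of $\cF^0_k$ is written, and compatibility between levels $k$ and $k+1$) is exactly where the harmonicity $P(f_{n+1})=f_n$ enters: the computation in the previous lemma, run backwards, shows that the level-$k$ and level-$(k+1)$ prescriptions agree precisely because $f$ is $P$-harmonic, using Eq.~(\ref{E1}) $s\text{-}\sum_\xi S_\xi S_\xi^*=1$ and the characterization of $P$ from Lemma~\ref{trans}. Positivity and the trace property on $\cF^0 e_n$ follow from the corresponding properties of $\varphi|_B$ restricted through the conditional expectation $E_B$, and the bound $0\le f_n\le 1$ gives $\psi(xe_n)\le e^{-n\beta}\varphi(x)=\htau(xe_n)$ for $x\in M_\varphi^+$, hence $\psi\le\htau$ after summing over $n$; domination in turn forces $\psi$ to be normal and semi-finite (it is dominated by the normal semi-finite trace $\htau$, and finite on each $e_n$). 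Finally, uniqueness is immediate: any normal semi-finite trace $\psi$ dominated by $\htau$ satisfies $\psi(x)=\sum_n\psi(e_nxe_n)$, and by the reduction $\psi(S_\xi S_\eta^* e_n)=\psi(S_\eta^*S_\xi e_{n+k})$ it is entirely determined by its restrictions to $Be_n$, i.e.\ by $(f_n)$; order-preservation of $(f_n)\mapsto\psi$ is clear from the defining formula.

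I expect the delicate point to be the passage from the densely-defined positive tracial functional to an honest normal semi-finite trace on $\tM$, i.e.\ controlling closability and $\sigma$-weak lower semicontinuity. The clean way around this is not to build $\psi$ by hand at all, but to use the order isomorphism already in place: the map $\rho:\fZ(\tM)_+\to L^\infty(\Omega,\nu)_+$ constructed above is injective, order-preserving and affine, sends $1$ to $1$, and intertwines $\theta$ with $\Sigma^{\pm1}$; since the order structure of $\fZ(\tM)_+$ is completely determined by the normal semi-finite traces dominated by multiples of $\htau$, it suffices to show $\rho$ is \emph{surjective}, and surjectivity onto all bounded harmonic sequences with $0\le f_n\le1$ is exactly the content of this lemma. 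So in the write-up I would phrase the argument as: given $(f_n)$, the formula above defines a positive tracial form on $\bigcup_n\cF^0 e_n$ dominated by $\htau|_{\bigcup_n\cF^0 e_n}$; by the domination it extends uniquely to a normal semi-finite trace $\psi\le\htau$ on $\tM$ (invoking that a densely-defined positive form dominated by a normal semi-finite trace extends to a normal semi-finite trace — a standard fact in Tomita–Takesaki/Haagerup weight theory); then $\psi=\htau(z\,\cdot)$ for a unique $z\in\fZ(\tM)_+$ with $\rho(z)=(f_n)$. Combined with the injectivity of $\rho$ and the intertwining relations $\rho\circ\theta=\Sigma^{\pm1}\circ\rho$ and $\rho(1)=1$, this identifies $(\fZ(\tM),\theta|_{\fZ(\tM)})$ with $(L^\infty(\Omega,\nu),\Sigma^{\pm1})$ and, taking $\Sigma$-fixed points, $\fZ(M)$ with $L^\infty(\overline{\Omega},\overline{\nu})$, completing the proof of Theorem~\ref{main}.
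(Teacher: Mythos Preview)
Your overall strategy---invert the injective order-preserving map $\rho$ by constructing, from a harmonic sequence $(f_n)$, a normal semi-finite trace $\psi\le\htau$ whose restriction to $Be_n$ is $\varphi(f_n\cdot)e^{-n\beta}$---is exactly the paper's, and your identification of the target formula $\psi(S_\xi S_\eta^*e_n)=e^{-(n+k)\beta}\varphi(f_{n+k}S_\eta^*S_\xi)$ for $\xi,\eta\in E^k$ is correct. Uniqueness and order-preservation are handled as you say.

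Where your write-up diverges from the paper is in how the functional is actually produced. You propose to \emph{define} $\psi$ on $\cF^0e_n$ by the formula and then verify well-definedness and positivity directly; you describe these as following ``from the corresponding properties of $\varphi|_B$ restricted through the conditional expectation $E_B$'', and then invoke a ``standard fact'' to extend. This is where the argument is thin. Well-definedness (if $\sum_i c_iS_{\xi_i}S_{\eta_i}^*=0$ then $\sum_i c_i\varphi(f_{n+k}S_{\eta_i}^*S_{\xi_i})=0$) and positivity on $\cF_k^0$ are not immediate from $\varphi|_B$ being tracial; one can push them through, but the natural way to do so is precisely to realize the formula as a limit of the manifestly well-defined, manifestly positive functionals $x\mapsto\varphi(f_{n+k}\,Q_{F_1}\circ\cdots\circ Q_{F_k}(x))$. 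The paper does exactly this: it sets $\psi_{n,k}(x)=\lim_{F_k}\cdots\lim_{F_1}\varphi(f_{n+k}Q_{F_1}\circ\cdots\circ Q_{F_k}(x))$, takes an ultrafilter limit in $k$ to obtain a positive functional $\psi_n\le\varphi|_{M_\varphi}$ (hence automatically normal), and only then checks that on $\cF_l^0$ it agrees with the explicit formula~(\ref{E4}), which in particular kills the ultrafilter dependence. So positivity, well-definedness, and normality come for free from the construction rather than from a separate verification.

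There is one further difference worth noting. Having $\psi_n$ as a normal trace on $M_\varphi$ dominated by $\varphi$, the paper writes $\psi_n=\varphi(g_n\cdot)$ with $g_n\in\fZ(M_\varphi)$, computes from~(\ref{E4}) the intertwining relation $g_nS_\xi=S_\xi g_{n+1}$, and then sets $g=\sum_n g_ne_n\in\fZ(\tM)$ and $\psi=\htau(g\cdot)$. This builds the central element explicitly and sidesteps any appeal to extension of densely defined weights. Your Radon--Nikodym route (first build $\psi\le\htau$, then extract $z\in\fZ(\tM)$) is legitimate, but it presupposes that the extension step has already been carried out cleanly; the paper's ordering of the argument avoids that dependency.
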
 
 
\begin{proof} 
First we construct a family of bounded normal traces $\psi_n$ on $M_\varphi$. 
Note that for any $x\in (M_\varphi)_+$ and $F_i\Subset \Lambda$, $i=1,2,\ldots,k$, we have an inequality
$$\varphi(f_{n+k} Q_{F_1}\circ Q_{F_2}\circ \cdots Q_{F_k}(x))\leq \varphi(Q_{F_1}\circ Q_{F_2}\circ \cdots Q_{F_k}(x))\leq\varphi(x).$$
Thus the limit 
$$\psi_{n,k}(x)=\lim_{F_k\Subset \Lambda}\cdots \lim_{F_2\Subset \Lambda}\lim_{F_1\Subset \Lambda}
\varphi(f_{n+k} Q_{F_1}\circ Q_{F_2}\circ \cdots Q_{F_k}(x))$$
exists and $\psi_{n,k}$ extends to a positive functional on $M_\varphi$ dominated by $\varphi$. 
We choose a free ultrafilter $\kappa\in \beta\N\setminus \N$ and set 
$$\psi_{n}(x)=\lim_{k\to\kappa}\psi_{n,k}(x),\quad x\in M_\varphi.$$
Then $\psi_n$ is a positive linear functional in $M_\varphi^*$ 
dominated by $\varphi|_{M_\varphi}$, and so $\psi_{n}$ is normal. 
Thanks to Lemma \ref{trans}, we have $\psi_{n,k}(y)=\varphi(P^k(f_{n+k})y)=\varphi(f_n y)$ 
for all $y\in B$ and $\psi_n(y)=\varphi(f_n y)$. 
In particular, the restriction of $\psi_n$ to $B$ is a trace. 

Direct computation implies that for all $\xi,\eta\in E^l$ and $l\leq k$,  
$$\psi_{n,k}(S_\xi S_\eta^*)=e^{-l\beta}\varphi(f_{n+l}S_\eta^*S_\xi),$$
which implies 
\begin{equation}\label{E4}
\psi_n(S_\xi S_\eta^*)
=e^{-l\beta}\varphi(f_{n+l}S_\eta^*S_\xi).
\end{equation}
Since  $\psi_n$ is normal and $\cF^0$ is dense in $M_\varphi$, 
this shows that $\psi_n$ does not depend on the choice of the ultrafilter, and the correspondence 
$(f_n)\mapsto (\psi_n)$ is order-preserving. 

Let $\xi_1,\xi_2,\eta_1,\eta_2\in E^l$. 
Eq.(\ref{E4}) implies 
$$\psi_n(S_{\xi_1}S_{\eta_1}^*S_{\xi_2}S_{\eta_2}^*)
=e^{-\beta l}\varphi(f_{n+l}(S_{\eta_2}^*S_{\xi_1})(S_{\eta_1}^*S_{\xi_2})),$$
and $\psi_n$ is a trace. 
Therefore there exists a unique element $g_n\in \fZ(M_\varphi)$ satisfying 
$0\leq g_n\leq 1$ and $\psi_n(x)=\varphi(g_nx)$ for all $x\in M_\varphi$. 
Similar computation as above shows 
$$\psi_n(S_\xi x S_\eta^*)=e^{-\beta}\psi_{n+1}(S_\eta^*S_\xi x)$$
for all $\xi,\eta\in E$ and $x\in M_\varphi$,  
and we get 
$$S_\eta^*g_nS_\xi=S_\eta^*S_\xi g_{n+1},\quad \forall \xi,\eta\in E.$$
Thus 
$$g_nS_\xi=\sum_{\eta\in \Lambda}^\infty S_\eta S_\eta^* g_nS_\xi
=\sum_{\eta\in \Lambda}^\infty S_\eta S_\eta^* S_\xi g_{n+1}=S_\xi g_{n+1}.$$
This together with Eq.(\ref{E2}) implies $g_ne_nS_\xi=S_\xi g_{n+1}e_{n+1}$, and 
$$g=\sum_{n\in \Z}g_ne_n$$
belongs to $\fZ(\tM)$. 

Let $\psi=\htau(g\cdot)$. 
Then $\psi$ is a normal semi-finite trace on $\tM$ dominated by $\htau$. 
For $x\in B$, we have
$$\psi(xe_n)=\htau(gxe_n)=\varphi(g_nx)e^{-n\beta}=\psi_n(x)e^{-n\beta}=\varphi(f_n x)e^{-n\beta}.$$
The correspondence $(f_n)\mapsto \psi$ is order-preserving by construction. 
\end{proof}

\begin{cor}\label{III} If 
$$e^{-\beta}\; s\mathchar`-\sum_{\xi\in \Lambda} \pi(\inpr{\xi}{\xi}_A)=1,$$
the von Neumann algebra $M$ is of type III. 
\end{cor}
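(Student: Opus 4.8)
The plan is to deduce the type III property from Theorem \ref{main} by showing that the hypothesis $e^{-\beta}\, s\text{-}\sum_{\xi\in\Lambda}\pi(\inpr{\xi}{\xi}_A)=1$ forces the operator $Q$ to be well-defined and the flow of weights to have no finite invariant measure. First I would note that the stated hypothesis is exactly the assertion that the net $\{\sum_{\xi\in F}\pi(\inpr{\xi}{\xi}_A)\}_{F\Subset\Lambda}$ is bounded with strong limit $e^{\beta}$, so the discussion just before Lemma \ref{trans} applies and $Q(x)=e^{-\beta}\, s\text{-}\sum_{\xi\in\Lambda}S_\xi^* x S_\xi$ is a genuine \emph{unital} normal completely positive map on $M$ (in particular on $B$), with $\varphi\circ Q=\varphi$ by (\ref{KMS2}). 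Restricting $Q$ to $\fZ(B)=L^\infty(X,\mu)$ gives a Markov operator $Q|_{\fZ(B)}$ which, by Lemma \ref{trans}, is the adjoint of $P$ with respect to the pairing coming from $\varphi$; since $Q$ is unital, the constant function $1$ is $Q$-harmonic, equivalently $\mu$ is $P$-invariant, i.e.\ $\mu\cdot P=\mu$.

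Next I would translate the desired conclusion into the language of the reduced flow of weights. By Theorem \ref{main}, $\fZ(\tM)$ is $L^\infty(\Omega,\nu)$ with $\theta$ acting as $\Sigma^{\pm 1}$, and by the Lemma following the definition of $\fT_M$, $M$ is semi-finite if and only if $\fT_M$ is dissipative, and the flow of weights has a finite invariant measure if and only if $\fT_M$ does. So it suffices to show that $M$ is \emph{not} semi-finite and the flow admits no finite invariant measure; in fact both follow once we show that the only normal semi-finite trace on $M$ (up to scalars) in the relevant sense is absent — concretely, I would show that the trace state $\varphi|_B$ extends to a $\tM$-trace proportional to $\htau$ precisely under the stated hypothesis, and that $\htau\circ\theta=e^{-|\beta|}\htau$ with $|\beta|>0$ then prevents any $\theta$-invariant (hence semi-finite) normal trace on $\tM$. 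More directly: the hypothesis says the constant sequence $f_n\equiv 1$ is a $P$-harmonic sequence, which by the last Lemma corresponds to the trace $\psi=\htau$ itself; but $\htau\circ\theta=e^{-|\beta|}\htau\neq\htau$ shows $1\in\fZ(\tM)$ is moved by $\theta$ onto something with a different trace, i.e.\ the flow $\theta|_{\fZ(\tM)}$ scales the canonical trace $\htau$ by $e^{-|\beta|}$. A von Neumann algebra is semi-finite iff its flow of weights is dissipative iff it admits a nonzero $\theta$-invariant normal semi-finite trace, and a standard computation shows the scaling by $e^{-|\beta|}\neq 1$ is incompatible with transitivity toward a fixed point supporting such a trace.

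The cleanest route, and the one I would actually write, is: under the hypothesis, $Q$ is unital, so $P^n(1)=1$ and the Poisson/tail boundary construction applies with $\mu$ genuinely $P$-invariant; then the measure $\nu$ on the tail boundary $\Omega$ is $\Sigma$-invariant (as recorded in the text, $\Sigma$ is $\nu$-preserving when $\mu\cdot P=P$). Translating back through $\rho$ and the suspension description, the flow of weights for $M$ preserves the infinite measure $\nu\times(\text{Lebesgue on }[0,|\beta|))$ but — crucially — \emph{no finite} invariant measure exists, because a finite invariant measure for the flow would force, via (\ref{E3}), the existence of a normal \emph{state} $\varphi'$ on $M$ with $\varphi'\circ\sigma^\varphi_t=\varphi'$, i.e.\ a trace state on $M$; but $M$ supports the faithful normal state $\varphi$ whose modular group $\sigma^\varphi_t=\gamma_{-\beta t}$ is visibly nontrivial (it scales each $S_\xi$), so $\varphi$ is not a trace, and $M$ being a factor (when $\omega$ is extremal) or more generally having the above rigid modular structure rules out a trace state. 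Hence the flow of weights has no finite invariant measure and $M$ is of type III.

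The main obstacle is the last step: carefully arguing that the stated hypothesis genuinely excludes \emph{type III$_1$-versus-other} issues is not needed (we only claim type III, not a subscript), but one must rule out the type II possibility rigorously. The honest way is to observe that $M$ semi-finite would give, via the flow-of-weights dictionary, a $\theta$-fixed point in $\fZ(\tM)$ of full support on which $\htau$ restricts to a finite trace; but $\htau\circ\theta=e^{-|\beta|}\htau$ with $|\beta|\neq 0$ makes $\htau$ strictly decrease along $\theta$, so no such fixed point of finite $\htau$-measure can exist unless the whole flow is supported where $\htau=0$ or $\htau=\infty$ — the former contradicts faithfulness of $\htau$, the latter contradicts semi-finiteness. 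I expect this scaling argument to be the delicate point, and I would present it by invoking the explicit relations $\theta(e_n)=e_{n+\epsilon}$, $\htau(xe_n)=\varphi(x)e^{-n\beta}$ established above, which make the incompatibility a one-line computation.
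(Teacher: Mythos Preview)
Your first paragraph is exactly right and matches the paper: the hypothesis says $Q$ is well-defined with $Q(1)=1$, and then Lemma~\ref{trans} gives $\varphi(P(f))=\varphi(fQ(1))=\varphi(f)$, i.e.\ $\mu$ is $P$-invariant, so $\nu$ is $\Sigma$-invariant on the tail boundary. But from that point on you invert the logic and the argument collapses.

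The error is the repeated claim that you must show the flow of weights has \emph{no} finite invariant measure. It is the opposite: the hypothesis \emph{produces} a faithful $\Sigma$-invariant probability measure $\nu$ on $(\Omega,\nu)=(\fZ(\tM),\theta)$, and that is precisely what kills semi-finiteness. A nonsingular transformation admitting a faithful finite invariant measure is conservative (Poincar\'e recurrence: a wandering set of positive measure would have disjoint iterates of the same measure, contradicting finiteness), so $\fT_M$ has no dissipative part; by the lemma following the definition of $\fT_M$, $M$ then has no semi-finite direct summand, i.e.\ $M$ is of type~III. This is the paper's one-line proof.

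Your third paragraph asserts that $\nu\times(\text{Lebesgue on }[0,|\beta|))$ is infinite and that ``no finite invariant measure exists'' --- but $\nu$ is a probability measure and $|\beta|<\infty$, so that product measure is finite and invariant; you have exhibited exactly the object whose existence you then deny. The subsequent attempt to derive a trace state on $M$ from a finite invariant measure for the flow is also wrong: type~III$_\lambda$ factors with $\lambda>0$ have flows with finite invariant measure and no trace. Finally, the scaling argument in your last paragraph uses only $\htau\circ\theta=e^{-|\beta|}\htau$, which holds for \emph{every} $M$ with periodic modular group regardless of type, so it cannot by itself distinguish type~III from semi-finite; the hypothesis is never used there.
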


\begin{proof} The assumption means that $Q$ is well-defined and $Q(1)=1$. 
Lemma \ref{trans} implies that $\varphi\circ P=P$, and the tail boundary has  
a faithful invariant probability measure. 
This would not be possible if the reduced flow of weights had dissipative part. 
\end{proof}

Recall that $\tau$ is a trace on $A$ defined by $\varphi\circ \Pi$. 
When $\tau$ is not a state, the projection $p_\infty$ is not zero and $B=B_0\oplus \C p_\infty$, 
where $B_0=\overline{\pi(A)}^w$. 
When we regard $p_\infty$ as an element in $\fZ(B)$, we can identify it as the characteristic function 
$\chi_{\{\infty\}}$ of a point at infinity in $X$. 
Let $X_0=X\setminus \{\infty\}$, and let $\mu_0=\chi_{X_0}\mu$. 
Then we have $\fZ(B_0)=L^\infty(X_0,\mu_0)$, $X=X_0\sqcup \{\infty\}$, and $\mu=\mu_0+(1-\|\tau\|)\delta_\infty$.

Let $\{a_i\}_{i\in I}$ be an approximate unit of $A$. 
Recall that we have $P(p_\infty)=0$. 
Let $P_0$ be a Markov operator acting on $\fZ(B_0)$ given by $P_0(f)=P(f)(1-p_\infty)$. 
Then we have 
$$P(f\oplus c p_\infty)=(P_0(f)\oplus \varphi_\infty(f)p_\infty),$$
where 
$$\varphi_\infty(f)=\frac{e^{-\beta}}{1-\|\tau\|}\sum_{\xi\in \Lambda}\lim_{i\in I}
\varphi(f\pi(\inpr{\xi}{(1-\Phi(a_i))\xi}_A)).$$
It is easy to see that that the Poisson boundary (resp. tali boundary) for $P$ is isomorphic 
to the Poisson boundary (resp. tail boundary) for $P_0$.  
Thus we get 

\begin{cor} \label{degenerate}
The reduced flow of weights $(\fZ(\tM),\theta|_{\fZ(\tM)})$ is identified with the tail boundary for $P_0$ 
with the time translation for positive $\beta$, and the inverse of the time translation for negative $\beta$. 
The center $\fZ(M)$ of $M$ is identified with the Poisson boundary for $P_0$. 
\end{cor}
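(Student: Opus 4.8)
The plan is to deduce Corollary \ref{degenerate} directly from Theorem \ref{main} by showing that passing from $P$ on $\fZ(B)$ to $P_0$ on $\fZ(B_0)$ does not change the relevant boundary objects. The key structural observation is that $p_\infty$ is a $P$-invariant region in the strongest possible sense: we have computed $P(p_\infty)=0$, while conversely $P$ maps $\fZ(B_0)=L^\infty(X_0,\mu_0)$ into itself only up to a ``leak'' into $\C p_\infty$, quantified by the functional $\varphi_\infty$. Writing a general element of $\fZ(B)=L^\infty(X,\mu)$ as $f\oplus c\,p_\infty$ with $f\in\fZ(B_0)$ and $c\in\C$, the displayed identity $P(f\oplus c\,p_\infty)=P_0(f)\oplus\varphi_\infty(f)\,p_\infty$ (whose verification is the routine computation expanding $P(f)=E_{\fZ(B)}(s\text{-}\sum_\xi S_\xi f S_\xi^*)$ and using $S_\xi p_\infty=0$) exhibits $P$ as a lower-triangular matrix of operators with respect to the decomposition $\fZ(B)=\fZ(B_0)\oplus\C p_\infty$, with $P_0$ in the upper-left corner and $0$ in the lower-right corner.

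From this triangular form I would identify the harmonic functions and harmonic sequences. For a bounded $P$-harmonic function $f\oplus c\,p_\infty$, the second component of the harmonicity equation forces $c=\varphi_\infty(f)$, so $f$ is automatically $P_0$-harmonic and $c$ is determined by $f$; conversely a $P_0$-harmonic $f$ together with $c:=\varphi_\infty(f)$ gives a $P$-harmonic element. Hence $f\oplus\varphi_\infty(f)p_\infty\mapsto f$ is a unital, order- and $*$-preserving bijection $H^\infty((X,\mu),P)\to H^\infty((X_0,\mu_0),P_0)$ of operator systems, and since the von Neumann algebra product on a Poisson boundary is $s\text{-}\lim_n P^n(fg)$ — and $P^n$ kills the $p_\infty$-component after one step — this bijection is multiplicative. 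The same argument applied coordinatewise to $\tP$ on $L^\infty(X\times\Z,\mu\times m)$ (or, equivalently, to harmonic sequences) shows that a bounded $P$-harmonic sequence $(f_n\oplus c_n p_\infty)$ is the same data as a bounded $P_0$-harmonic sequence $(f_n)$ with $c_n=\varphi_\infty(f_{n+1})$, giving an isomorphism $S^\infty((X,\mu),P)\cong S^\infty((X_0,\mu_0),P_0)$ of operator systems that intertwines the translation operators $\Sigma$. Therefore the tail boundary $(\Omega,\nu)$ for $P$ is canonically isomorphic to the tail boundary $(\Omega_0,\nu_0)$ for $P_0$, compatibly with $\Sigma$ and with the states $(f_n)\mapsto\mu(f_0)=\mu_0(f_0)$ on the two sides, and likewise for the Poisson boundaries by passing to the $\Sigma$-fixed subalgebra.

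With these identifications in hand, Corollary \ref{degenerate} is immediate from Theorem \ref{main}: the reduced flow of weights $(\fZ(\tM),\theta|_{\fZ(\tM)})$ was identified there with $(L^\infty(\Omega,\nu),\Sigma)$ for positive $\beta$ and with $(L^\infty(\Omega,\nu),\Sigma^{-1})$ for negative $\beta$, and we now replace $(\Omega,\nu,\Sigma)$ by the isomorphic $(\Omega_0,\nu_0,\Sigma)$; similarly $\fZ(M)\cong L^\infty(\overline\Omega,\overline\nu)=L^\infty(\Omega,\nu)^\Sigma\cong L^\infty(\Omega_0,\nu_0)^\Sigma$, the Poisson boundary for $P_0$. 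The only point requiring a little care — and the one I would single out as the main obstacle — is checking that the normalization constant in $\varphi_\infty$ is the right one, i.e. that $\varphi_\infty$ is genuinely a state on $\fZ(B_0)$ so that $P$ (and hence $P_0$) is unital; this is where the infinite-type condition $e^{-\beta}\sum_{\xi\in\Lambda}\tau(\inpr{\xi}{\xi}_A)=1$ together with the value $\oo(p_\infty)=1-\|\tau\|$ enters, via $\varphi_\infty(1)=\frac{e^{-\beta}}{1-\|\tau\|}\sum_{\xi\in\Lambda}\bigl(\tau(\inpr{\xi}{\xi}_A)-\lim_i\tau(\inpr{\xi}{\Phi(a_i)\xi}_A)\bigr)=\frac{1-\|\tau\|}{1-\|\tau\|}=1$, using \eqref{KMS2} applied to the approximate unit. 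Everything else is a transcription of the already-established Theorem \ref{main} through the canonical boundary isomorphism.
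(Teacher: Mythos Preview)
Your proposal is correct and matches the paper's approach exactly: the paper simply asserts that ``it is easy to see that the Poisson boundary (resp.\ tail boundary) for $P$ is isomorphic to the Poisson boundary (resp.\ tail boundary) for $P_0$'' and then invokes Theorem~\ref{main}, and what you have written is precisely the detailed verification of that assertion via the lower-triangular form of $P$ relative to the decomposition $\fZ(B)=\fZ(B_0)\oplus\C p_\infty$. (One tiny quibble: under your bijection the reference states $(f_n\oplus c_n p_\infty)\mapsto\mu(f_0\oplus c_0 p_\infty)$ and $(f_n)\mapsto\mu_0(f_0)$ differ by the term $c_0(1-\|\tau\|)$, so they are not literally equal as you wrote, but both are faithful normal states and hence induce equivalent measures on the boundary, which is all that is needed.)
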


\begin{cor}\label{degenerateIII} Assume that $\tau$ is not a state. 
If 
$$\;s\mathchar`-\lim_{F\Subset \Lambda} s\mathchar`-\lim_{i\in I}\sum_{\xi\in F}\pi(\inpr{\xi}{(\Phi(a_i))\xi}_A)$$
converges to a scalar, the von Neumann algebra $M$ is of type III. 
\end{cor}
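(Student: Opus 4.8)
Here is how I would prove Corollary \ref{degenerateIII}.

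The plan is to deduce from the hypothesis that the measure $\mu_0$ on $X_0$ is invariant under the reduced Markov operator $P_0$, i.e. $\mu_0\cdot P_0=\mu_0$, and then to run the argument of Corollary \ref{III} with $P_0$ in place of $P$. So the first task is to compute $\varphi\circ P_0$ on the positive part of $\fZ(B_0)$. For $f\in\fZ(B_0)$ with $0\le f\le 1-p_\infty$, the identity $P(f)=P_0(f)+\varphi_\infty(f)p_\infty$ together with $\varphi(p_\infty)=1-\|\tau\|$ gives $\varphi(P_0(f))=\varphi(P(f))-(1-\|\tau\|)\varphi_\infty(f)$. Applying Lemma \ref{trans} with $g=1$ and using $Q_F(1)=e^{-\beta}\sum_{\xi\in F}\pi(\inpr{\xi}{\xi}_A)$ yields $\varphi(P(f))=e^{-\beta}\sum_{\xi\in\Lambda}\varphi(f\pi(\inpr{\xi}{\xi}_A))$, while by the very definition of $\varphi_\infty$ one has $(1-\|\tau\|)\varphi_\infty(f)=e^{-\beta}\sum_{\xi\in\Lambda}\bigl(\varphi(f\pi(\inpr{\xi}{\xi}_A))-\lim_{i\in I}\varphi(f\pi(\inpr{\xi}{\Phi(a_i)\xi}_A))\bigr)$; here the two series converge absolutely since $\sum_{\xi\in\Lambda}\tau(\inpr{\xi}{\xi}_A)=e^\beta$ by \eqref{KMS1}. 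Subtracting, the $\inpr{\xi}{\xi}_A$–terms cancel and I am left with
$$\varphi(P_0(f))=e^{-\beta}\sum_{\xi\in\Lambda}\lim_{i\in I}\varphi(f\pi(\inpr{\xi}{\Phi(a_i)\xi}_A)).$$

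Next I would bring in the hypothesis. Since every partial sum $\sum_{\xi\in F}\pi(\inpr{\xi}{\Phi(a_i)\xi}_A)$ lies in $\overline{\pi(A)}^w=B_0$ and annihilates $p_\infty$, the assumed iterated strong limit has the form $c(1-p_\infty)$ for some scalar $c\ge 0$. Choosing the approximate unit $\{a_i\}$ increasing makes the nets monotone and bounded, and the estimate $0\le\inpr{\xi}{\Phi(a_i)\xi}_A\le\inpr{\xi}{\xi}_A$ gives a summable-in-$\xi$ bound uniform in $i$, so the sum over $\Lambda$, the limit over $i$, and the limit over $F$ may be interchanged; combined with the normality of the functional $\varphi(f\,\cdot\,)$ this gives $\varphi(P_0(f))=e^{-\beta}c\,\varphi(f)$. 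Finally $P_0$ is unital, so specializing to $f=1-p_\infty$ and using $\varphi(1-p_\infty)=\|\tau\|\ne 0$ (nonzero because $\omega$ is of infinite type) forces $c=e^\beta$; hence $\varphi\circ P_0=\varphi$ on $\fZ(B_0)$, that is $\mu_0\cdot P_0=\mu_0$.

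To conclude, by Corollary \ref{degenerate} the reduced flow of weights $(\fZ(\tM),\theta|_{\fZ(\tM)})$ is the tail boundary for $P_0$ equipped with its translation $\Sigma$ (or $\Sigma^{-1}$ when $\beta<0$). The invariance $\mu_0\cdot P_0=\mu_0$ makes $\Sigma$ preserve the canonical faithful probability measure on the tail boundary, so, exactly as in the proof of Corollary \ref{III}, the reduced flow of weights carries a faithful finite invariant measure and therefore has no dissipative part; hence $M$ is of type III. The main obstacle is the computation in the second paragraph — justifying the triple interchange of limits and correctly pinning down the constant $c=e^\beta$; conceptually that is the only real content, the remaining step being a transcription of the case where $\tau$ is a state.
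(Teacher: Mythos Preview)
Your proof is correct and follows essentially the same strategy as the paper: show $\varphi\circ P_0=\varphi$ on $\fZ(B_0)$ and then invoke the argument of Corollary~\ref{III}. The only difference is cosmetic: the paper applies Lemma~\ref{trans} directly with $g=1-p_\infty$ to obtain
\[
\varphi(P_0(f))=\varphi(P(f)(1-p_\infty))=\lim_{F\Subset\Lambda}\varphi(fQ_F(1-p_\infty))
=\lim_{F\Subset\Lambda}\lim_{i\in I}e^{-\beta}\sum_{\xi\in F}\varphi(f\pi(\inpr{\xi}{\Phi(a_i)\xi}_A)),
\]
whereas you reach the same expression by first splitting $P(f)=P_0(f)+\varphi_\infty(f)p_\infty$, computing $\varphi(P(f))$ via Lemma~\ref{trans} with $g=1$, and then subtracting the $\varphi_\infty$-term. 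Both routes land on the identical formula and finish the same way, so there is no substantive divergence.
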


\begin{proof} It suffices to show $\varphi\circ P_0=\varphi$ on $\fZ(B_0)$ as before. 
Indeed, for $f\in \fZ(B_0)$, we have 
\begin{align*}
\lefteqn{\varphi(P_0(f))=\varphi(P(f)(1-p_\infty))} \\
 &=\lim_{F\Subset \Lambda}\varphi(fQ_F(1-p_\infty))
 =\lim_{F\Subset \Lambda}\lim_{i\in I}\varphi(f\pi(\inpr{\xi}{\Phi(a_i)\xi}_A)),
\end{align*}
which shows that $\varphi\circ P_0$ is proportional to $\varphi|_{\fZ(B_0)}$. 
Since $P_0$ is unital, we get the statement. 
\end{proof}

\section{Examples}
We denote $\N_0=\{0\}\cup \N$, where $\N$ is the set of natural numbers. 
Let $A$ be a unital C$^*$-algebra and let $\alpha=(\alpha_1,\alpha_2,\ldots,\alpha_d)$ be a $d$-tuple 
of endomorphisms of $A$. 
We always assume $d\geq 2$. 
When every $\alpha_i$ is unital, we say that $\alpha$ is unital. 
As in \cite{Kak15}, we consider the Cuntz-Pimsner algebra arising from $\alpha$ though we do not 
assume that $\alpha$ is  unital. 
Let 
$$E=\ell^2(\{1,2,\ldots,d\})\otimes A=\bigoplus_{i=1}^d A,$$ 
which is a Hilbert C$^*$-module with 
$(\xi_i)\cdot a=(\xi_i a)$ and  
$$\inpr{\xi}{\eta}_A=\sum_{i=1}^d\xi_i^*\eta_i.$$
We denote by $\{e_i\}_{i=1}^d$ the canonical basis of $E$ over $A$. 
We define the left action $\Phi$ of $A$ on $E$ by $\Phi(a)(\xi_i)=(\alpha_i(a)\xi_i)$. 
When $\alpha$ is not unital, the left action is degenerate. 
The Cuntz-Toeplitz algebra $\cT_E$ has generators $\{T_i=T_{e_i}\}_{i=1}^d$ and $\Pi(A)$ satisfying the relations 
$$T_i^*T_j=\delta_{i,j}\Pi(1_A),$$
$$\Pi(a)T_i=T_i\Pi(\alpha_i(a)).$$
The Cuntz-Pimsner algebra $\cO_E$ is the quotient of $\cT_E$ by the ideal generated by 
$$\Pi(1_A)-\Pi(1_A)\sum_{i=1}^d T_iT_i^*.$$

With this setting, we keep using the same notation as in the previous sections except that 
we use the notation $(X,\mu)$, instead of $(X_0,\mu_0)$, for the point realization of $\fZ(B_0)$, 
as we mainly work on $B_0$, rather than $B$, in the rest of this paper. 
Now the two conditions in Theorem \ref{KMS0} are 
\begin{equation}\label{PF1}
e^{-\beta}d\tau(1_A)=1,
\end{equation}
\begin{equation}\label{PF2}
e^{-\beta}\tau(\sum_{i=1}^d\alpha_i(a))=\tau(a),\quad \forall a\in A. 
\end{equation}
When $\alpha$ is unital, the operator $\Pi(1_A)$ is the unit of $\cT_E$, and $\tau$ is a state. 
Thus Eq.(\ref{PF1}) holds if and only if $\beta=\log d$, and Eq.(\ref{PF2}) becomes
\begin{equation}\label{PF3}
\tau(\frac{1}{d}\sum_{i=1}^d\alpha_i(x))=\tau(x).
\end{equation}
(see \cite{Kai92}).
When $\alpha$ is not unital, the only constraint coming from Eq.(\ref{PF1}) is $\beta\leq \log d$,  
which makes the situation more flexible.

\begin{remark} 
In most of the examples of the correspondence $E$ with a dynamical origin in the literature 
(not necessarily coming from the above construction), it is quite often the case that 
an infinite type KMS state is unique and it exists only at $\beta=\log d$, where $d$ is a natural number determined by the dynamics being a sort of 
$d$-fold covering. 
As we saw in \cite{IKW07}, such a dynamics in the ergodic theoretical level is often isomorphic to 
the Bernoulli $d$-shift, and $M$ is the hyperfinite type III$_{\frac{1}{d}}$ factor. 
In particular, the reduced flow of weights is trivial. 
In the von Neumann algebra level (or ergodic theoretical level), such a model is equivalent to
the following example (see \cite{IKW07}). 
\end{remark}

\begin{example}  
Let $X=\{0,1,2,\ldots,d-1\}^{\N_0}$, and let $A=C(X)$. 
For $x=(x_n)\in X$ and $0\leq i\leq d-1$, we use the notation $ix$ for $(i,x_0,x_1,\ldots)\in X$. 
We introduce a $d$-tuple $\alpha=(\alpha_1,\alpha_2,\ldots,\alpha_d)$ of unital endomorphisms of $C(X)$ by 
$$\alpha_i(f)(x)=f((i-1)x).$$
Since $\alpha$ is unital, Eq.(\ref{PF1}) holds if and only if $\beta=\log d$. 
If we denote by $\mu$ the probability measure on $X$ corresponding to $\tau$, Eq.(\ref{PF2}) becomes 
$$\int_X\chi_{\{i\}}(x_0)f(x_1,x_2,\ldots)d\mu(x)=\frac{1}{d}\int_X f(x)d\mu(x).$$
This is possible if and only if $\mu$ is the Bernoulli measure $(\frac{1}{d}
\sum_{i=0}^{d-1}\delta_i)^{\otimes \N}$ with an equal weight, and we assume this. 
Then $Q$ is given by  
$$Q(g)(x)=\frac{1}{d}\sum_{i=0}^{d-1}g(ix),\quad g\in L^\infty(X,\mu),$$
and the Markov operator $P$ is the shift $P(f)((x_n)_{n=0}^\infty)=f((x_{n+1})_{n=0}^\infty)$. 
Let $(f_n)$ be a bounded harmonic sequence for $P$. 
Then 
$$f_k(x)=P^n(f_{n+k})(x)=f_{k+n}(x_{n},x_{n+1},\ldots),$$ 
and $f_k$ is measurable with respect to the tail $\sigma$-algebra, which is trivial for the Bernoulli measure.  
Thus $f_k$ is a constant function, and the tail boundary for $P$ is trivial. 
\end{example}

Next we give an easy example showing that the reduced flow of weights $\fT_M$ can be non-trivial. 

\begin{example} \label{III0} 
Let $X$ be a compact Hausdorff space, let $A=C(X)$, let $\fT$ be a homeomorphism of $X$, and 
let $\alpha_i(f)(x)=f(\fT^{-1}x)$ for $i=1,2,\ldots,d$. 
Then Eq(\ref{PF1}) holds if and only if $\beta=\log d$. 
Let $\mu$ be the probability measure corresponding to $\tau$. 
Then Eq(\ref{PF2}) holds if and only if $\mu$ is a $\fT$-invariant measure. 
Assume that $\mu$ is an ergodic invariant measure. 
Then $Q(g)(x)=g(\fT^{-1}x)$ and $P(f)(x)=(\fT x)$.  
In this case the tail boundary is identified by $(X,\mu,\fT)$, 
and the reduced flow of weights $(\fX_M,m_M,\fT_M)$ for the von Neumann algebra $M$ is identified with $(X,\mu,\fT)$.  
\end{example}

The above example in particular implies the following theorem. 

\begin{theorem} Let $\fT$ be a probability-measure-preserving ergodic transformation acting 
on a compact Hausdorff space, and let $d$ be an integer larger than 1. 
Then there exists a Cuntz-Pimsner algebra and an extreme $\log d$-KMS state of the gauge action such that 
the flow of weights for the factor arising from it via the GNS construction is the suspension flow over $\fT$ 
with a constant ceiling function $\log d$.  
\end{theorem}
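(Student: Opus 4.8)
The plan is to deduce this theorem as an immediate consequence of Example~\ref{III0} together with Theorem~\ref{main} (or rather its geometric reformulation, the suspension-flow lemma). Given an ergodic probability-measure-preserving transformation $\fT$ on a compact Hausdorff space $X$ with invariant measure $\mu$, and given an integer $d\geq 2$, I would set $A=C(X)$ and take the $d$-tuple $\alpha=(\alpha_1,\ldots,\alpha_d)$ of \emph{unital} endomorphisms defined by $\alpha_i(f)=f\circ\fT^{-1}$ for every $i$. This is exactly the data of Example~\ref{III0}. Because $\alpha$ is unital, condition (\ref{PF1}) forces $\beta=\log d$; and for this $\beta$, condition (\ref{PF2}) reduces to $\tau\circ(\frac1d\sum_i\alpha_i)=\tau$, i.e.\ $\tau$ corresponds to a $\fT$-invariant probability measure, which is satisfied by $\mu$. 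By Theorem~\ref{KMS0} this $\tau$ is the restriction of a (unique, since $\alpha$ is unital here) $\log d$-KMS state $\omega\in K_{\log d}^\infty(\gamma)$ of the gauge action of $\cO_E$.

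Next I would invoke the ergodicity of $\mu$. As computed in Example~\ref{III0}, the Markov operator is $P(f)=f\circ\fT$, so the harmonic sequences $(f_n)$ for $P$ are exactly the sequences with $f_{n-1}=f_n\circ\fT$; since $\fT$ is an invertible measure-preserving transformation, such a sequence is determined by $f_0$ alone, and $S^\infty((X,\mu),P)$ is identified with $L^\infty(X,\mu)$ with the translation $\Sigma$ corresponding to $\fT$ itself. Hence the tail boundary $(\Omega,\nu)$ is $(X,\mu)$ with $\Sigma$ acting as $\fT$. In particular the Poisson boundary $L^\infty(\Omega,\nu)^\Sigma$ is the algebra of $\fT$-invariant functions, which is $\C$ by ergodicity; so $\fZ(M)=\C$ and $M$ is a factor. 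Applying Theorem~\ref{main} with $\beta=\log d>0$, the reduced flow of weights $(\fZ(\tM),\theta|_{\fZ(\tM)})$ is $(L^\infty(X,\mu),\fT)$.

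Finally, the suspension-flow lemma says the flow of weights for $M$ is the suspension flow over the base transformation $\fT_M$ with constant ceiling function $|\beta|=\log d$; since we have just identified $\fT_M$ with $\fT$ acting on $(X,\mu)$, this flow is precisely the suspension flow over $\fT$ with ceiling $\log d$, as claimed. I do not foresee a genuine obstacle here: the content is entirely carried by Theorem~\ref{main} and the explicit computation in Example~\ref{III0}, and the only points to check are the routine verifications that $(A,\alpha)$ falls under the hypotheses (unitality of $\alpha$, existence of the KMS state from the $\fT$-invariant measure) and that ergodicity of $\mu$ yields the triviality of the Poisson boundary. If anything requires a remark, it is simply noting that $X$ compact Hausdorff and $\fT$ a homeomorphism is exactly the setup of Example~\ref{III0}, so no approximation or regularity argument is needed.
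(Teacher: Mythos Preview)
Your proposal is correct and follows exactly the paper's approach: the paper presents this theorem immediately after Example~\ref{III0} with the sentence ``The above example in particular implies the following theorem,'' offering no further argument. Your write-up simply unpacks that implication---building $A=C(X)$ with $\alpha_i(f)=f\circ\fT^{-1}$, invoking Theorem~\ref{KMS0} for the KMS state, identifying the tail boundary with $(X,\mu,\fT)$ via Theorem~\ref{main}, and reading off the suspension-flow description---which is precisely what the paper leaves implicit.
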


Now we give non-unital examples.  
We will see that the following simple construction gives rise to surprisingly rich structure of 
infinite type KMS states. 
Recall that the structure of $K_\beta^\infty(\gamma)$ is determined by 
Theorem \ref{KMS0}, or alternatively, Eq.(\ref{PF1}) and Eq.(\ref{PF2}).

\begin{example}\label{non-unital} Let $X=\{0,1\}^{\N_0}$, and let $A=C(X)$. 
We often identify $A$ with the infinite tensor product $\otimes_{k=0}^\infty C(\{0,1\})$. 
For simplicity, we denote by $\chi_i\in C(\{0,1\})$ the characteristic function $\chi_{\{i\}}$ 
of the set $\{i\}$. 
Let $d/2\leq s\leq d$ be an integer and let $t=d-s$. 
We set  
$$\alpha_i(f)=\left\{
\begin{array}{ll}
\chi_0\otimes f  , &\quad 1\leq i\leq s,  \\
\chi_1\otimes f , &\quad s<i\leq d
\end{array}
\right..
$$
Since $\alpha=(\alpha_1,\alpha_2,\ldots,\alpha_d)$ is not unital, we need $\beta\leq \log d$ for the existence of 
an infinite type $\beta$-KMS state for the gauge action of $\cO_E$. 
Let $\mu$ be the probability measure of $X$ corresponding to $\frac{1}{\|\tau\|}\tau$ in Eq.(\ref{PF2}). 
Then Eq.(\ref{PF2}) is equivalent to 
\begin{equation}\label{PF4}
e^{-\beta}\int_{X}(s\chi_0\otimes f+t\chi_1\otimes f)d\mu(x)= \int_{X}f(x)d\mu(x). 
\end{equation}

When $d$ is even and $s=t=d/2$, Eq.(\ref{PF4}) becomes 
$$\frac{e^{-\beta}d}{2}\int_{X}(1\otimes f)d\mu(x)=\int_{X}fd\mu(x),$$
which is equivalent to the condition that $\beta=\log\frac{d}{2}$ and $\mu$ is a shift-invariant measure.  

When $t\neq s$, by setting $f=1$ in Eq.(\ref{PF4}), we get 
$$s\mu(C_0)+t\mu(C_1)=e^\beta,$$
$$\mu(C_0)+\mu(C_1)=1,$$
where for $(i_0,i_1,\ldots,i_{n-1})\in \{0,1\}^n$, we denote by $C_{i_0i_1\cdots i_{n-1}}$ the cylinder set 
$$\{(x_i)\in X;\; x_j=i_j,\quad 0\leq j\leq n-1\}.$$
Thus
$$\mu(C_0)=\frac{e^\beta-t}{s-t},\quad \mu(C_1)=\frac{s-e^\beta}{s-t}.$$
and $\log t\leq \beta \leq \log s$. 
The Bernoulli measure $b_p=\otimes_{k=0}^\infty((p\delta_0+(1-p)\delta_1)$ 
with $p=\frac{e^\beta-t}{s-t}$ satisfies Eq.(\ref{PF4}), and we see $K_\beta^\infty(\gamma)\neq \emptyset$ 
for $\log t\leq \beta\leq \log s$. 
\end{example}

We give more detailed accounts of this example treating three different cases separately. 
We first assume $s=d$. 
For $0<p<1$ and $y\in C_1$, we define a probability measure $m_{p,y}$ on $X$ by 
$$m_{p,y}=(1-p)\sum_{n=0}^\infty p^n\delta_{0^ny}.$$
When $\beta<\log d$, the measure $m_{\frac{e^\beta}{d},y}$ satisfies Eq.(\ref{PF4}). 
We denote by $\varphi_{\beta,y}$ the corresponding $\beta$-KMS state

\begin{theorem} Let the notation be as in Example \ref{non-unital}, and assume $s=d$.  
Then there exists an infinite type $\beta$-KMS state of the gauge action of $\cO_E$ if and only if 
$\beta\leq \log d$ (negative numbers are allowed though $\beta\neq 0$). 
\begin{itemize}
\item[(1)] For $\beta=\log d$, there exists a unique $\beta$-KMS state of infinite type and the corresponding 
factor $M$ is the Powers factor of type III$_{\frac{1}{d}}$. 
\item[(2)] For $\beta<\log d$, $\beta\neq 0$, we have $\ex K_\beta^\infty(\gamma)=\{\varphi_{\beta,y}\}_{y\in C_1}$, 
and all of them give rise to the hyperfinite II$_1$ factor. 
\end{itemize}
\end{theorem}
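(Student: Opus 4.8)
The plan is to analyze the case $s=d$ by first reducing the KMS equation to a concrete description of all solutions, then applying Theorem \ref{main} (or Corollary \ref{degenerate}) to each extreme KMS state. When $s=d$, every $\alpha_i$ equals $\chi_0\otimes\,\cdot\,$, so the left action $\Phi(a)$ on $E=\bigoplus_{i=1}^d A$ sends $(\xi_i)$ to $(\chi_0\otimes a\,\xi_i)$. Thus Eq.(\ref{PF2}) reads $e^{-\beta}d\,\tau(\chi_0\otimes f)=\tau(f)$ for all $f\in C(X)$. First I would record that the existence bound $\beta\le\log d$ comes from Eq.(\ref{PF1}): $\|\tau\|=e^{-\beta}d\,\tau(\chi_0\otimes 1)\le e^{-\beta}d\,\|\tau\|$, so $e^{-\beta}d\ge 1$ unless $\tau=0$; combined with Eq.(\ref{PF4}) admitting the solutions $m_{e^\beta/d,y}$ exhibited before the statement, this gives existence exactly for $\beta\le\log d$, $\beta\ne 0$.

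For part (1), when $\beta=\log d$ the equation becomes $\tau(\chi_0\otimes f)=\tau(f)/d$ and $\|\tau\|=\tau(\chi_0\otimes 1)\le\|\tau\|/1$; I would show this forces $\tau(\chi_1\otimes 1)=0$, hence $\tau$ is supported on the single point $0^\infty=(0,0,\dots)$, i.e. $\tau=\|\tau\|\delta_{0^\infty}$ with $\|\tau\|=1$ (using $\tau(\chi_0\otimes\chi_0\otimes\cdots)$ iteratively). Actually a cleaner route: $\alpha$ unital is false here, so $\tau$ need not be a state a priori, but the normalization (\ref{PF1}) with $\beta=\log d$ gives $\|\tau\|=\tau(C_0)$, forcing $\mu(C_1)=0$, then iterating forces $\mu=\delta_{0^\infty}$. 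One then checks that the GNS construction reduces to the Cuntz-algebra situation over a single point: modulo $p_\infty$, $B_0=\C$, and $M$ is generated by $d$ isometries $S_1,\dots,S_d$ with $\sum S_iS_i^*=1$ in a state of product type with weights $(1/d,\dots,1/d)$, which is the classical Powers construction giving the hyperfinite type III$_{1/d}$ factor. I would invoke Corollary \ref{III} (or \ref{degenerateIII}) to see the type is III and cite the standard computation (as in \cite{IKW07}) identifying it as the Powers factor; the reduced flow of weights is trivial since $P_0$ acts on $\C$.

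For part (2), with $\beta<\log d$, $\beta\ne 0$: I would first show $\ex K_\beta^\infty(\gamma)=\{\varphi_{\beta,y}\}_{y\in C_1}$ by solving Eq.(\ref{PF4}) completely. Writing $p=e^\beta/d\in(0,1)\setminus\{1/d\}$ (value $1/d$ excluded only if $\beta=0$), Eq.(\ref{PF4}) says $p\,\mu(\chi_0\otimes f)=\mu(f)$ after normalizing $\mu=\tau/\|\tau\|$; equivalently, pushing forward under the shift $\sigma$, the measure $\mu$ satisfies $\mu=p\,(\delta_0\otimes\mu)$ on cylinders beginning with $0$ and assigns $\mu(C_{i_0\cdots i_{n-1}})=0$ whenever some $i_j=1$ with $j<$ (last index)... more precisely the disintegration shows $\mu$ is carried by sequences of the form $0^n y$ with $y_0=1$, and the weight of the prefix $0^n$ is $(1-p)p^n$. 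This identifies the extreme solutions with the measures $m_{p,y}$, $y\in C_1$, parametrized by the "entry point" $y$ into $C_1$; extremality corresponds to $y$ being a single Dirac point, i.e. $\tau$ can no longer be decomposed. Then for each such $\varphi_{\beta,y}$ I would compute $P_0$ via Lemma \ref{trans}: since the support of $\mu_0$ in $X_0$ is the countable discrete set $\{0^n y: n\ge 0\}$ together with its shift-images, and the Markov operator $P$ (the "co-shift") moves $0^n y\mapsto 0^{n-1}y$ with a loop at the fixed structure, the tail boundary is trivial (the tail $\sigma$-algebra of this deterministic-with-drift chain on a countable set is trivial because all trajectories coalesce). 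Hence the reduced flow of weights is trivial, so by Lemma in Section 3, $M$ is semifinite; combined with $M$ being a factor (extreme KMS state) and having a finite trace (the flow has a finite invariant measure — indeed trivial), $M$ is a finite factor, necessarily the hyperfinite II$_1$ factor since it is approximately finite dimensional and infinite-dimensional (it contains the $d$ isometries, ruling out type I$_n$). The main obstacle I anticipate is the explicit classification of the extreme points of the solution set of Eq.(\ref{PF4}) when $s=d$ — showing that every infinite-type KMS state is of the form $\varphi_{\beta,y}$ and that these are exactly the extreme ones — since this requires carefully disintegrating $\mu$ along the "number of leading zeros" and arguing that extremality collapses the transverse measure to a point.
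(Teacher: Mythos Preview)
Your treatment of part~(1) and the classification of extreme points in part~(2) are essentially the paper's arguments: for $\beta=\log d$ the equation forces $\mu=\delta_{0^\infty}$ and $B_0=\C$, whence the reduced flow is trivial and $M$ is of type III$_{1/d}$; for $\beta<\log d$ one disintegrates $\mu$ along the number of leading zeros, identifying solutions with measures on $C_1$ of total mass $1-p$ and hence extreme points with Dirac measures $m_{p,y}$.

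There is, however, a genuine error in your type computation for part~(2). You assert that the tail boundary of $P_0$ is trivial and conclude that the reduced flow of weights is trivial, hence $M$ is semifinite. Both steps are wrong. First, identifying $B_0\cong\ell^\infty(\N_0)$ via $0^ny\leftrightarrow n$, the operator $P_0$ is the backward shift $P_0(f)(n)=f(n+1)$; a bounded harmonic sequence $(f_n)$ then satisfies $f_n(k)=f_{n+1}(k+1)$, so $f_n(k)=g(n-k)$ for a unique $g\in\ell^\infty(\Z)$. Thus the tail boundary is $\ell^\infty(\Z)$ with time translation acting as the shift on $\Z$ --- it is \emph{not} trivial. (What is trivial is the Poisson boundary, reflecting that $M$ is a factor.) Second, even granting your premise, a trivial reduced flow of weights would make the flow of weights periodic, forcing $M$ to be of type III$_{e^{-|\beta|}}$, not semifinite; semifiniteness corresponds to the base transformation being \emph{dissipative}, which is exactly what the translation of $\Z$ is.

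So the correct chain is: tail boundary $\cong(\Z,\text{translation})$ is dissipative $\Rightarrow$ $M$ is a semifinite factor. This does not yet give finiteness, and your appeal to ``finite invariant measure'' does not help (the translation of $\Z$ has no finite invariant measure). The paper closes the gap by an explicit construction: using $p_nS_j=S_jp_{n+1}$ and $p_\infty S_j=S_jp_0$, one checks that $\rho=e^{-\beta}p_\infty+\sum_{n\ge 0}e^{n\beta}p_n$ satisfies $\sigma^\varphi_t=\Ad\rho^{\sqrt{-1}t}$, so $\varphi(\rho^{-1}\,\cdot\,)$ is a faithful normal trace, and a direct computation of $\varphi(\rho^{-1})$ as a convergent geometric series shows it is finite. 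This is the step your outline is missing.
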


\begin{proof} (1) We first assume $\beta=\log d$. 
Then Eq.(\ref{PF4}) becomes 
$$\int_{X} \chi_0\otimes fd\mu(x)=\int_{X}f(x)d\mu(x),$$
and the Dirac measure $\delta_{0^\infty}$ is the only probability measure satisfying this condition. 
Since $B_0=\C$ in this case, the tail boundary is trivial, and the reduced flow of weights is trivial. 
Thus $M$ is of type III$_{\frac{1}{d}}$. 

(2) Assume $\beta < \log d$ now, and we set $p=\frac{e^\beta}{d}$. 
We claim that the affine space of probability measures on $X$ satisfying Eq.(\ref{PF4}) is affine isomorphic to 
the set of measures on $C_1$ with total mass $1-p$, and the isomorphism is given by restriction to $C_1$. 
Indeed, assume that $\mu$ satisfies Eq.(\ref{PF4}). 
Then we have $\mu(C_0)=p$ and $\mu(C_1)=1-p$. 
Repeated use of Eq.(\ref{PF4}) implies that the measure of the cylinder set $C_{0^n}$ is $p^n$. 
For a cylinder set $C_{i_0,i_1,\ldots,i_n}$ not of the above type, let $0\leq r\leq n$ be the smallest number with $i_r=1$. 
Then we have 
$$\mu(C_{i_0,i_2,\ldots,i_n})=p^r\mu(C_{i_r,i_{r+1},\ldots,i_n}),$$
which is determined by the restriction of $\mu$ to $C_1$. 
On the other hand, let $\mu_1$ be a measure of $C_1$ whose total mass is $1-p$. 
We may regard $\mu_1$ as a measure of $X$ by setting $\mu_1(Y)=\mu_1(Y\cap C_1)$. 
We construct a probability measure $\mu$ on $X$ by 
$$\mu=\sum_{n=0}^\infty p^n\delta_{0^n}\otimes \mu_1.$$
Then $\mu$ satisfies Eq(\ref{PF4}) and $\mu|C_1=\mu_1$, which proves the claim.

Thanks to the claim we see that the set of extreme points of the probability measures satisfying Eq.(\ref{PF4}) 
is given by $\{m_{p,y}\}_{y\in C_1}$, and hence we get $\ex K_\beta^\infty(\gamma)=\{\varphi_{\beta,y}\}_{y\in C_1}$. 
Let $M$ be the von Neumann algebra arising from $\varphi_{\beta,y}$. 
It remains to show that $M$ is a II$_1$ factor. 

We can identify $B_0$ with $\ell^\infty(\{0^ny\}_{n=0}^\infty)\cong \ell^\infty(\N_0)$. 
Then the Markov operator $P_0$ is given by the backward shift, and its tail boundary with time translation 
is isomorphic to the translation of $\Z$.  
Thus the reduced flow of weights is isomorphic to the translation of $\Z$, and $M$ is a semi-finite factor. 
To finish the proof, it suffices to show that $M$ is finite. 

We can identify $B$ with 
$$\ell^\infty(\{0^ny\}_{n=0}^\infty\cup\{\infty\}),$$
and we denote by $p_n$ the projection in $B$ corresponding to the set $\{0^ny\}$. 
Since $\pi(a)S_j=S_j\pi(\chi_0\otimes a)$ for $a\in A$, we have $p_nS_j=S_jp_{n+1}$. 
Since $S_jp_\infty=0$, we have  
$$p_\infty S_j=S_j-\sum_{n=0}^\infty p_nS_j=S_j-\sum_{n=0}^\infty S_jp_{n+1}=S_j(p_\infty+p_0)=S_jp_0.$$
Let 
$$\rho=e^{-\beta} p_\infty+\sum_{n=0}^\infty e^{n\beta}p_n,$$
which is a positive operator affiliated with $M$. 
Since $\sigma^\varphi_t(S_j)=e^{-\beta t\sqrt{-1}}S_j$, 
we get $\Ad \rho^{i t\sqrt{-1}} (S_j)=\sigma^\varphi_t(S_j)$, and $\sigma^\varphi_t=\Ad \rho^{\sqrt{-1}t}$. 
Thus $\varphi(\rho^{-1}\cdot )$ is a faithful normal semi-finite trace of $M$. 
Since 
\begin{align*}
\lefteqn{\varphi(\rho^{-1})=e^\beta\varphi(p_\infty)+\sum_{n=0}^\infty e^{-n\beta}\varphi(p_n)} \\
 &=e^\beta(1-\frac{e^\beta}{d})+\sum_{n=0}^\infty e^{-n\beta}(1-\frac{e^\beta}{d})(\frac{e^\beta}{d})^{n+1} 
 =e^\beta(1-\frac{e^\beta}{d})+(1-\frac{e^\beta}{d})\frac{e^\beta}{d}\sum_{n=0}^\infty \frac{1}{d^n} \\
 &=e^\beta\frac{d-e^\beta}{d-1}<\infty,
\end{align*}
it is a finite trace, and $M$ is finite. 
\end{proof}

We denote by $\sigma$ the shift of $X$, that is, $\sigma((x_n)_{n=0}^\infty)=(x_{n+1})_{n=0}^\infty$. 
We abuse the notation and we use the same symbol for the shift of $\tX=\{0,1\}^\Z$, that is, 
$\sigma((x_n)_{n\in \Z})=(x_{n+1})_{n\in \Z}$. 
Let $A_\infty=C(\tX)$, and let $A_n$ be the C$^*$-subalgebra of $A_\infty$ generated by 
the coordinate functions $\{x_k\}_{k=-n}^\infty$. 
We often identify $A$ with $A_0$ and regard $A$ as a subalgebra of $A_\infty$. 
Identifying probability measures with states for abelian C$^*$-algebras, 
the restriction of a probability measure on $A_\infty$ to $A$ makes sense. 

We assume that $0<t\leq e^\beta \leq s<d$ now, and define $r_0\in C(\{0,1\})$ by 
$$r_0=\frac{s}{e^\beta}\chi_0+\frac{t}{e^{\beta}}\chi_1,$$
and define $r\in A_\infty$ by $r(x)=r_0(x_0)$.  
Letting $\lambda=\frac{t}{s}$ and $q=\frac{\log s-\beta}{\log s-\log t}$, we have 
$0<\lambda\leq 1$, $0\leq q\leq 1$, and $r(x)=\lambda^{x_0-q}$. 

\begin{definition}
We denote by $\cM(\{0,1\}^\Z,\sigma,\lambda,q)$ the set of $\sigma$-quasi-invariant probability measures $\nu$ on 
$\{0,1\}^\Z$ satisfying 
\begin{equation}\label{qi}
\frac{d\nu\circ \sigma}{d\nu}(x)=\lambda^{x_0-q}, 
\end{equation} 
for $\nu$-a.e. $x$. 
\end{definition}

\begin{theorem}\label{shift} 
Let the notation be as in Example \ref{non-unital}, and assume that $0<t\leq e^\beta \leq s<d$. 
Let $r(x)$, $\lambda$, and $q$ be as above. 
Then the restriction of $\nu\in \cM(\tX,\sigma,\lambda,q)$ to $A$ gives rise to 
an affine isomorphism from $\cM(\tX,\sigma,\lambda,q)$ onto 
the set of probability measures $\mu$ on $X$ satisfying Eq.(\ref{PF4}). 
In particular $K_\beta^\infty(\gamma)$ is affine isomorphic to $\cM(\tX,\sigma,\lambda,q)$. 

Moreover, for the infinite type $\beta$-KMS state corresponding to $\mu$, the reduced flow of weights for 
the von Neumann algebra $M$ arising from it is isomorphic to the shift $\sigma$ on 
the measure space $(\tX,\nu)$.  
\end{theorem}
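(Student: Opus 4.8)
\textbf{Proof plan for Theorem \ref{shift}.}

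The plan is to set up a correspondence between the two families of measures and then feed the outcome into Corollary \ref{degenerate}. First I would analyze the condition Eq.(\ref{PF4}) on a probability measure $\mu$ on $X=\{0,1\}^{\N_0}$. Writing $\mu$ against cylinder functions and using $\alpha_i(f)=\chi_0\otimes f$ for $i\le s$, $\chi_1\otimes f$ for $i>s$, Eq.(\ref{PF4}) reads $e^{-\beta}\mu\circ\sigma_*\!\big((s\chi_0+t\chi_1)\cdot(1\otimes f)\big)=\mu(f)$ where $\sigma$ is the one-sided shift; equivalently $e^{-\beta}(s\chi_0+t\chi_1)\cdot(\mu\circ\sigma^{-1})=\mu$ as measures on $X$ (here $\mu\circ\sigma^{-1}$ is the pushforward of $\mu$ under $\sigma$, which on $X$ is just $\mu$ restricted to the tail coordinates). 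The point is that this is a one-sided quasi-invariance relation with density $r_0(x_0)=\frac{s}{e^\beta}\chi_0(x_0)+\frac{t}{e^\beta}\chi_1(x_0)=\lambda^{x_0-q}$. The classical fact (a Kolmogorov-extension / natural-extension argument for one-sided Markov-type relations) is that such one-sided $\sigma$-quasi-invariant measures on $\{0,1\}^{\N_0}$ are in bijection, via restriction of the natural extension, with $\sigma$-quasi-invariant measures $\nu$ on the two-sided shift $\tX=\{0,1\}^{\Z}$ satisfying $\frac{d\nu\circ\sigma}{d\nu}(x)=\lambda^{x_0-q}$, i.e. with $\cM(\tX,\sigma,\lambda,q)$. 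I would prove the bijectivity directly: given $\mu$ on $X$ satisfying Eq.(\ref{PF4}), the density relation lets one consistently compute $\mu$ on all cylinders $C_{i_0\cdots i_n}$ from the "shifted copies" and then invoke that $\mu$ together with its coherent family of conditional laws defines a unique two-sided $\nu$; conversely restriction of any $\nu\in\cM(\tX,\sigma,\lambda,q)$ to the coordinates $x_0,x_1,\dots$ gives a $\mu$ satisfying Eq.(\ref{PF4}). This yields the claimed affine isomorphism, and combined with Theorem \ref{KMS0} (in the form of Eq.(\ref{PF1})--(\ref{PF2})), the identification of $K_\beta^\infty(\gamma)$ with $\cM(\tX,\sigma,\lambda,q)$.

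Next I would identify the reduced flow of weights. By Corollary \ref{degenerate}, it suffices to compute the tail boundary of the Markov operator $P_0$ on $\fZ(B_0)=L^\infty(X,\mu)$ together with the time translation. Here $B_0=\overline{\pi(A)}^w$ and, because $A=C(X)$ is abelian, $\fZ(B_0)=B_0=L^\infty(X,\mu)$. I would compute $P_0$ explicitly: from the characterization in Lemma \ref{trans} (with $Q_F(\pi(a))=e^{-\beta}\sum_{\xi\in F}\pi(\inpr{\xi}{\Phi(a)\xi}_A)$ and $\inpr{e_i}{\Phi(a)e_i}_A=\alpha_i(a)$), the map dual to $P$ sends $f$ to $e^{-\beta}\sum_{i=1}^d\alpha_i(f)=e^{-\beta}(s\chi_0\otimes f+t\chi_1\otimes f)=(sr_0(0)^{-1}\cdots)$; more precisely $P_0$ is the transfer (Ruelle) operator associated to the one-sided shift and the weight $r_0$, namely $P_0(f)(x)=\frac{1}{e^\beta}\big(s\, f(0x)+t\,f(1x)\big)$ acting on $L^\infty(X,\mu)$, suitably normalized so that $P_0(1)=1$ (which holds by Eq.(\ref{PF4})). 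The tail boundary of a transfer operator of this shape is a standard object: harmonic sequences $(f_n)$ with $P_0(f_{n+1})=f_n$ correspond to functions measurable with respect to the full (two-sided) $\sigma$-algebra of $\tX$, and the space $S^\infty((X,\mu),P_0)$ is naturally identified with $L^\infty(\tX,\nu)$ with the translation $\Sigma$ going over to the two-sided shift $\sigma$. Concretely, a bounded $P_0$-harmonic sequence $(f_n)_{n\in\Z}$ is built from a $\sigma$-invariant (mod $\nu$) function on $\tX$ by reading off the values along shifted coordinates, and the reverse martingale-convergence argument (as in the treatment of tail boundaries in \cite{Kai92}, \cite{I12}) shows this correspondence is bijective and isometric. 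Putting this into Corollary \ref{degenerate} gives that $(\fZ(\tM),\theta|_{\fZ(\tM)})$ is isomorphic to $(L^\infty(\tX,\nu),\sigma^{\pm 1})$, hence the reduced flow of weights is the shift $\sigma$ on $(\tX,\nu)$ (the sign of the exponent being absorbed by replacing $\sigma$ with $\sigma^{-1}$, which does not change "the shift on $(\tX,\nu)$" up to the orientation convention, and I would state it with the convention matching Theorem \ref{main}).

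The main obstacle I expect is the first part: cleanly proving the affine bijection between solutions of Eq.(\ref{PF4}) and $\cM(\tX,\sigma,\lambda,q)$, and in particular verifying that the density relation on $X$ really does force the two-sided quasi-invariance with exactly the density $\lambda^{x_0-q}$, including matching the constants $\lambda=t/s$ and $q=(\log s-\beta)/(\log s-\log t)$ arising from $r_0=\frac{s}{e^\beta}\chi_0+\frac{t}{e^\beta}\chi_1$. This requires care because $\mu$ need not be Markov or even quasi-invariant in any obvious a priori sense; one has to extract the cocycle from Eq.(\ref{PF4}) on all cylinders and then build the natural extension. Once that is in place, identifying $P_0$ with the transfer operator and computing its tail boundary is essentially bookkeeping plus an appeal to the general boundary theory recalled before Theorem \ref{main}; the ergodicity/factoriality statement is automatic from Theorem \ref{main}(1) since extreme KMS states give factors, matching ergodicity of $(\tX,\nu,\sigma)$.
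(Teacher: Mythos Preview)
Your overall architecture matches the paper's: establish the bijection between solutions of Eq.(\ref{PF4}) and $\cM(\tX,\sigma,\lambda,q)$ via a natural-extension/Kolmogorov consistency argument, then compute the tail boundary of $P_0$ and invoke Corollary~\ref{degenerate}. For the first part the paper does exactly this, building states $\omega_n$ on the subalgebras $A_n\subset C(\tX)$ by explicit integrals against $r_0(x_{-n})\cdots r_0(x_{-1})\,d\mu$, checking consistency from Eq.(\ref{PF4}), and verifying the cocycle identity (\ref{qi}) directly.

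There is, however, a genuine gap in your identification of $P_0$. You correctly compute the dual map $Q(g)=e^{-\beta}\sum_i\alpha_i(g)=r_0\otimes g$, which is a Koopman-type operator (composition with $\sigma$ times the potential $r_0$). But the $\mu$-adjoint of $Q$ is \emph{not} the Ruelle transfer operator $f\mapsto e^{-\beta}\bigl(s\,f(0x)+t\,f(1x)\bigr)$: that formula gives $P_0(1)=(s+t)/e^\beta=d/e^\beta\neq 1$ in the regime $e^\beta<d$, and more fundamentally it does not depend on $\mu$ at all, whereas the tail boundary you are after must depend on $\nu$. The adjoint of a weighted composition operator with respect to an \emph{arbitrary} measure is not the standard transfer operator; that identification only holds for conformal/Gibbs measures.

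The paper's key step here is to use the already-constructed $\nu$ on $\tX$ to recognize
\[
P_0(f)=E_\nu\bigl(f\circ\sigma^{-1}\,\big|\,\cG_{[0,\infty)}\bigr),
\]
a conditional expectation in $(\tX,\nu)$. With this formula in hand, setting $g_n=f_n\circ\sigma^{-n}$ turns a $P_0$-harmonic sequence into a bounded martingale for the increasing filtration $\{\cG_{[-n,\infty)}\}_{n\geq 0}$, and (forward) martingale convergence yields the identification of $S^\infty((X,\mu),P_0)$ with $L^\infty(\tX,\nu)$, intertwining time translation with $\sigma$. Conversely, for $f\in L^\infty(\tX,\nu)$ one sets $f_n=E_\nu(f\circ\sigma^n\,|\,\cG_{[0,\infty)})$ and checks $P_0(f_{n+1})=f_n$ from the tower property. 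Your transfer-operator formula would not produce this martingale structure (and its harmonic sequences would be the same for every $\mu$), so the tail-boundary identification cannot be carried out as you describe without first correcting the expression for $P_0$.
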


\begin{proof} (1) Assume that a probability measure $\nu$ on $\tX$ satisfies Eq.(\ref{qi}), and we set $\mu$ 
to be the restriction of $\nu$ to $A$. 
For $f\in A$, we have 
$$\int_{\tX}f(x)d\nu(x)=\int_{\tX}f\circ \sigma(x) d\nu\circ\sigma(x)=\int_{\tX}r(x)f\circ\sigma(x)d\nu(x),$$
which shows that $\mu$ satisfies Eq.(\ref{PF4}). 
Next we claim that $\nu$ is uniquely determined by $\mu$. 
To show the claim, it suffices to show that the restriction of $\nu$ to $A_n$ is determined by $\mu$. 
Indeed, for $f\in A_n$, we have 
\begin{align*}
\lefteqn{\int_{\tX}f(x)d\nu(x)=\int_{\tX}f\circ \sigma^n(x) d\nu\circ \sigma^n(x)} \\
 &= \int_{\tX}f\circ \sigma^n(x)r(x)r\circ\sigma(x)\cdots r\circ\sigma^{n-1}(x) d\nu(x)\\
 &=\int_{X}f(y_0,y_1,\ldots)r_0(y_0)r_0(y_1)\cdots r_0(y_{n-1})d\mu(y), 
\end{align*}
which shows the claim. 

Assume conversely that $\mu$ is a probability measure on $X$ satisfying Eq.(\ref{PF4}). 
We define a state $\omega_n$ of $A_n$ by 
$$\omega_n(f)=\int_{\{0,1\}^n\times X} f(x_{-n},x_{-n+1},\ldots)r_0(x_{-n})r_0(x_{-n+1})\cdots r_0(x_{-1})d\mu(x_{-n},x_{-n+1},\ldots).$$
Then Eq.(\ref{PF4}) implies that the restriction of $\omega_n$ to $A_{n-1}$ is $\omega_{n-1}$. 
Thus we can uniquely extend $\omega_n$ to $A_\infty$, and we denote by $\nu$ the corresponding probability 
measure on $\tX$. 
For $f\in A_n$, we have 
\begin{align*}
\lefteqn{\int_{\tX}f(x)d\nu\circ\sigma(x)=\int_{\tX} f\circ \sigma^{-1}(x)d\nu(x)=\omega_{n+1}(f\circ \sigma^{-1})} \\
 &=\int_{\{0,1\}^{n+1}\times X} f(x_{-n-1},x_{-n},\ldots)r_0(x_{-n-1})r_0(x_{-n})\cdots r_0(x_{-1})d\mu(x_{-n-1},x_{-n},\ldots) \\
 &=\int_{\{0,1\}^n\times X} r_0(x_0)f(x_{-n},x_{-n+1},\ldots)r_0(x_{-n})r_0(x_{-n+1})\cdots r_0(x_{-1})d\mu(x_{-n},x_{-n+1},\ldots) \\
 &=\omega_{n}(rf)=\int_{\tX}f(x)r(x)d\nu(x).
\end{align*}
Since this holds for any function $f$ in $\cup_{n=0}^\infty A_n$, we get Eq.(\ref{qi}). 
This establishes the one-to-one correspondence between $\nu$ and $\mu$. 

Assume $\mu$ and $\nu$ are related as above, and let $\varphi$ be the infinite type $\beta$-KMS state corresponding to $\mu$. 
Then we have $Q(g)=r_0\otimes g$, and thanks to Lemma \ref{trans} the Markov operator $P_0$ is determined by 
\begin{align*}
\lefteqn{\int_X P_0(f)(x)g(x)d\mu(x)} \\
 &=\int_X f(x)(r_0\otimes g)(x)d\mu(x)=\int_{\tX}fr g\circ \sigma d\nu=\int_{\tX} f\circ \sigma^{-1} g d\nu.
\end{align*}
Let $\cG_{[m,n]}$ be the $\sigma$-algebra generated by $\{x_i\}_{i=m}^n$. 
When $n=\infty$, we use the notation $\cG_{[m,\infty)}$.  
Identifying $B_0$ with $L^\infty(\tX,\cG_{[0,\infty)},\nu)$, 
we get 
$$P_0(f)=E(f\circ \sigma^{-1};\cG_{[0,\infty)}),$$
where the right-hand side is the conditional expectation of $f\circ \sigma^{-1}$ given $\cG_{[0,\infty)}$.

We show that the tail boundary for $P_0$ with time translation is isomorphic to $(\tX,\nu)$ with the shift $\sigma$. 
For $f\in L^\infty(\tX, \nu)$, we set 
$$f_n=E(f\circ \sigma^n;\cG_{[0,\infty)})\in B_0.$$ 
Let $0\leq k\leq n$ and $g\in B_0$. 
Note that the Radon-Nikodym derivative $\frac{d\nu\circ\sigma^{-k}}{d\nu}$ is $\cG_{[-k,-1]}$-measurable.
Then\begin{align*}
\lefteqn{\int_{\tX}f_n(x)g(x)d\nu(x)=\int_{\tX}f\circ\sigma^n(x)g(x)d\nu(x)} \\
 &=\int_{\tX}f\circ \sigma^{n-k}(x)g\circ \sigma^{-k}(x)d\nu\circ\sigma^{-k}(x)\\
 &=\int_{\tX}f\circ \sigma^{n-k}(x)g\circ \sigma^{-k}(x)\frac{d\nu\circ\sigma^{-k}}{d\nu}(x)d\nu(x)\\ 
 &=\int_{\tX}E(f\circ \sigma^{n-k};\cG_{[-k,\infty)})(x)
 g\circ \sigma^{-k}(x)\frac{d\nu\circ\sigma^{-k}}{d\nu}(x)d\nu(x) \\
 &= \int_{\tX}E(f\circ \sigma^{n-k};\cG_{[-k,\infty)})(x)
 g\circ \sigma^{-k}(x)d\nu\circ\sigma^{-k}(x)\\
 &=\int_{\tX}E(f\circ \sigma^{n-k};\cG_{[-k,\infty)})\circ\sigma^k(x)
 g(x)d\nu(x),
\end{align*}
which shows $f_n=E(f\circ \sigma^{n-k};\cG_{[-k,\infty)})\circ\sigma^k$. 
Thus 
$$P_0(f_{n+1})=P_0(E(f\circ\sigma^n;\cG_{[-1,\infty)})\circ \sigma)
=E(E(f\circ\sigma^n;\cG_{[-1,\infty)});\cG_{[0,\infty)})=f_n.$$
In a similar way, we can show $P_0(f_{n+1})=f_n$ for negative $n$ too, 
and we get $(f_n)\in S^\infty((X,\mu),P_0)$. 

On the other hand, let $(f_n)\in S^\infty((X,\mu),P_0)$, and we set 
$g_n=f_n\circ \sigma^{-n}$, which is $\cG_{[-n,\infty)}$-measurable. 
For $h\in L^\infty(\tX,\cG_{[-n,\infty)},\nu)$, we have 
\begin{align*}
\lefteqn{\int_{\tX} g_{n+1}h d\nu}\\
&=\int_Xf_{n+1}(x_{-n-1},\ldots)h(x_{-n},\ldots)
r_0(x_{-n-1})r_0(x_{-n})\cdots r_0(x_{-1})d\mu(x_{-n-1,\ldots}) \\
 &=\int_X P_0(f_{n+1})(x_{-n},\ldots)h(x_{-n},x_{-n+1},\ldots)
r_0(x_{-n})\cdots r_0(x_{-1})d\mu(x_{-n},\ldots) \\
 &=\int_{\tX} g_n h d\nu, 
\end{align*}
which shows $E(g_{n+1};\cG_{[-n,\infty)})=g_n$. 
Thus the martingale convergence theorem shows that $\{g_n\}_{n=0}^\infty$ converges in $L^2(\tX,\nu)$, 
and we denote by $f$ the limit. 
Since $\{g_n\}$ is bounded in $L^\infty(\tX,\nu)$, the function $f$ belongs to $L^\infty(\tX,\nu)$. 
The correspondence between $f$ and $(f_n)$ we have just constructed give an unital order preserving isometry between 
two Banach spaces $L^\infty(\tX,\nu)$ and $S^\infty((X,\mu_0),P_0)$. 
Moreover, it intertwines the shift and time translation.  
Thus the reduced flow of weights for the von Neumann algebra arising from $\varphi$ is 
isomorphic to $(\tX,\nu,\sigma)$.   
\end{proof}

Note that for $\lambda=1$, the set $\cM(\tX,\sigma,\lambda,q)$ is nothing but the set of shift-invariant probability 
measures on $\tX$. 

\begin{cor}\label{d/2} Let the notation be as in Example \ref{non-unital}.   
We assume that $d$ is an even number larger than 2, and $s=t=d/2$. 
Then there exists an infinite type $\beta$-KMS state of the gauge action $\gamma$ of $\cO_E$ if and only if 
$\beta=\log\frac{d}{2}$. 
There exists an affine isomorphism between $K_{\log\frac{d}{2}}^\infty(\gamma)$ and the set of shift-invariant probability 
measures on $\{0,1\}^\Z$. 
For each shift-invariant probability measure $\nu$, the reduced flow of weights for the von Neumann algebra $M$ 
arising from the corresponding KMS state is isomorphic to the shift on $(\{0,1\}^\Z,\nu)$.   
\end{cor}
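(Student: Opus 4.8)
The plan is to obtain Corollary \ref{d/2} as a direct specialization of Theorem \ref{shift}, after first recording the existence statement, which is already contained in Example \ref{non-unital}. In the case $d$ even and $s=t=d/2$, Eq.(\ref{PF4}) reads $e^{-\beta}\frac{d}{2}\int_X (1\otimes f)\,d\mu=\int_X f\,d\mu$; taking $f=1$ forces $e^{-\beta}d/2=1$, i.e. $\beta=\log\frac{d}{2}$, which is a legitimate KMS parameter (i.e. nonzero) precisely because $d>2$, and conversely the remaining content of the equation is exactly shift-invariance of $\mu$. Hence, by Theorem \ref{KMS0}, an infinite type $\beta$-KMS state of $\gamma$ exists if and only if $\beta=\log\frac{d}{2}$.

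Next I would apply Theorem \ref{shift} with $s=t=d/2$ and $\beta=\log\frac{d}{2}$. The hypothesis $0<t\le e^\beta\le s<d$ holds, with $t=e^\beta=s=d/2$ and $d/2<d$ since $d>2$. In this boundary case $r_0=\frac{s}{e^\beta}\chi_0+\frac{t}{e^\beta}\chi_1=\chi_0+\chi_1=1$, so $\lambda=t/s=1$ and $r\equiv 1$; the parameter $q$ plays no role, since the Radon-Nikodym constraint (\ref{qi}) becomes $\frac{d\nu\circ\sigma}{d\nu}=1$, i.e. $\sigma$-invariance of $\nu$, independently of $q$. Thus $\cM(\tX,\sigma,1,q)$ is exactly the set of shift-invariant probability measures on $\tX=\{0,1\}^\Z$ (the remark following Theorem \ref{shift}). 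Theorem \ref{shift} then yields simultaneously: restriction to $A$ is an affine isomorphism of $\cM(\tX,\sigma,1,q)$ onto the probability measures on $X$ satisfying (\ref{PF4}), so $K_{\log(d/2)}^\infty(\gamma)$ is affinely isomorphic to the shift-invariant probability measures on $\{0,1\}^\Z$; and for the infinite type KMS state attached to $\mu$ (equivalently to the corresponding $\nu$, the two-sided extension), the reduced flow of weights for $M$ is isomorphic to the shift $\sigma$ on $(\tX,\nu)$.

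There is no serious obstacle: the corollary is the degenerate endpoint $s=t$ of Theorem \ref{shift}. The only point deserving a remark is that the formula $q=(\log s-\beta)/(\log s-\log t)$ is undefined when $s=t$; this is harmless because $\lambda=1$ collapses (\ref{qi}) to genuine $\sigma$-invariance regardless of $q$, so one may just take $q=0$ in the statement. All the machinery of the proof of Theorem \ref{shift} — the bijection $\mu\leftrightarrow\nu$, the identification of $P_0$ with a conditional expectation, and the martingale convergence argument identifying the tail boundary with $(\tX,\nu,\sigma)$ — goes through verbatim with $r_0\equiv 1$, and no additional estimate is needed.
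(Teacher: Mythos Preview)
Your proposal is correct and matches the paper's approach exactly: the paper states Corollary \ref{d/2} without proof, immediately after Theorem \ref{shift} and the one-line remark that for $\lambda=1$ the set $\cM(\tX,\sigma,\lambda,q)$ is precisely the shift-invariant probability measures on $\tX$. Your observation that the formula for $q$ becomes $0/0$ when $s=t$ but is irrelevant since $\lambda=1$ collapses (\ref{qi}) to genuine invariance is a fair point the paper glosses over.
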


We assume $0<t<s$ now. 
Then $0<\lambda<1$, and the Radon-Nikodym derivative $r(x)=\lambda^{x_0-p}$ in Eq.(\ref{qi}) is not trivial. 
For $n\in \Z$ and $x\in \tX$, we set 
$$c_n(x)=\left\{
\begin{array}{ll}
\sum_{j=0}^{n-1}x_j-qn , &\quad n\geq 1 \\
0 , &\quad n=0  \\
-\sum_{j=1}^{-n}x_{-j}-qn , &\quad n\leq -1
\end{array}
\right.,
$$
which is an additive cocycle of $\sigma$ in the sense that $c_n$ satisfies $c_{m+n}=c_m+c_n\circ \sigma^m$. 
For $\nu\in \cM(\tX,\sigma,\lambda,q)$, we have 
$$\frac{d\nu\circ \sigma^n}{d\nu}(x)=\lambda^{c_n(x)}.$$
Thus if $\nu$ has an atom at $z\in \tX$, it has an atom at $\sigma^n(z)$ for any $n\in \Z$ and 
$$\nu(\{\sigma^n(z)\})=\nu\circ \sigma^n(\{z\})=\lambda^{c_n(z)}\nu(\{z\}).$$ 
In particular, if $z$ is a periodic point with period $n$, we have 
$c_n(z)=0$, and 
$$\frac{1}{n}\sum_{j=0}^{n-1}z_j=q=\frac{\log s-\beta}{\log s-\log t}.$$
With this observation, we can determine the atomic measures in $\ex \cM(\tX,\sigma,\lambda,q)$ 
as follows. 

Assume that $y\in \tX$ is a periodic point with minimal period $n$. 
For
$$\beta=\log s-(\log s-\log t)\frac{1}{n}\sum_{k=0}^{n-1}y_k,$$
we define a probability measure $\nu_y$ on $\tX$ by 
$$\nu_y=\frac{1}{\sum_{k=0}^{n-1}\lambda^{c_k(y)}}\sum_{k=0}^{n-1}\lambda^{c_k(y)}\delta_{\sigma^k(y)}.$$ 
Then $\nu_y\in \ex\cM(\tX,\sigma,\lambda,q)$, and we denote by 
$\varphi_y$ the corresponding KMS state in $\ex K_\beta^\infty(\gamma)$.  

Let $z\in \tX$ be an aperiodic sequence satisfying 
\begin{equation}\label{C}
\sum_{n\in \Z}\lambda^{c_n(z)}<\infty,
\end{equation}
and define a probability measure $\nu_{\beta,z}$ on $\tX$ by 
$$\nu_{\beta,z}=\frac{1}{\sum_{n\in \Z}\lambda^{c_n(z)}}\sum_{n\in \Z}\lambda^{c_n(z)}\delta_{\sigma^n(z)}.$$ 
Then $\nu_{\beta,z}\in \ex\cM(\tX,\sigma,\lambda,q)$, and we denote by $\varphi_{\beta,z}$ 
the corresponding KMS state in $\ex K_\beta^\infty(\gamma)$. 
Note that $\varphi_{\beta,z}$ depends only on the orbit of $z$. 
The conditions (\ref{C}) holds if $\beta$ satisfies 
\begin{equation}\label{sufficient}
\limsup_{n\to\infty}\frac{1}{n}\sum_{k=1}^nz_{-k}<\frac{\log s-\beta}{\log s-\log t}<
\liminf_{n\to\infty}\frac{1}{n}\sum_{k=1}^nz_k.
\end{equation}

Note that for $q=0,1$, we have singletons 
$$\cM(\tX,\sigma,\lambda,0)=\{\delta_{0^\infty}\},\quad \cM(\tX,\sigma,\lambda,1)=\{\delta_{1^\infty}\}.$$  

\begin{cor} Let the notation be as in Example \ref{non-unital}, and  assume $0<t<s<d$. 
Then there exists an infinite type $\beta$-KMS state of the gauge action of $\cO_E$ if and only if 
$\log t\leq \beta\leq \log s$ for $t\neq 1$ and $0< \beta\leq \log s$ for $t=1$. 
\begin{itemize}
\item[(1)] Let $y\in \tX$ be a periodic point with minimal period $n$, and let 
$$\beta=\log s-(\log s-\log t)\frac{1}{n}\sum_{k=0}^{n-1}y_k.$$
Then $\varphi_y\in \ex K_\beta^\infty(\gamma)$, and the corresponding factor is the Powers factor of type 
III$_{e^{-n\beta}}$, where 
$$e^{-n\beta}=\prod_{k=0}^{n-1}\frac{1}{s^{1-y_k}t^{y_k}}.$$
In particular, 
\begin{itemize}
\item[(i)]  For $\beta=\log s$, there exists a unique infinite type $\beta$-KMS state $\varphi_{0^\infty}$, 
and the corresponding factor is the Powers factor of type III$_\frac{1}{s}$.  
\item[(ii)]  For $t\neq 1$ and $\beta=\log t$, there exists a unique infinite type $\beta$-KMS state 
$\varphi_{1^\infty}$, and the corresponding factor is the Powers factor of type III$_\frac{1}{t}$.  
\end{itemize}
\item[(2)] Let $z\in\{0,1\}^{\Z}$ be an aperiodic sequence  
and assume that $\beta$ satisfies (\ref{C}). 
Then $\varphi_{\beta,z}\in \ex K_\beta^\infty(\gamma)$, and the corresponding factor is semi-finite.  
The set $\{\varphi_{\beta,y}\}_y$ exhausts all the semi-finite factor states in $\ex K_\beta^\infty(\gamma)$. 
\end{itemize}
\end{cor}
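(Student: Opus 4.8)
The plan is to transport everything through the affine isomorphism of Theorem \ref{shift}, which matches $\ex K_\beta^\infty(\gamma)$ with the ergodic members of $\cM(\tX,\sigma,\lambda,q)$ and identifies the reduced flow of weights $\fT_M$ of the factor $M$ attached to $\nu\in\cM(\tX,\sigma,\lambda,q)$ with the shift $\sigma$ on $(\tX,\nu)$; recall moreover that $M$ is a factor (resp.\ semi-finite) exactly when $\fT_M$ is ergodic (resp.\ dissipative), and that the flow of weights of $M$ is the suspension of $\fT_M$ with constant ceiling $|\beta|$. Since $t=d-s$ is a positive integer we have $t\ge 1$, hence $\beta\ge\log t\ge 0$, and as $\beta\ne 0$ in fact $\beta>0$; thus only the shift $\sigma$ itself (never $\sigma^{-1}$) occurs. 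For the existence statement I would argue as in Example \ref{non-unital}: putting $f=1$ in (\ref{PF4}) forces $\mu(C_0)=\frac{e^\beta-t}{s-t}$ and $\mu(C_1)=\frac{s-e^\beta}{s-t}$, so positivity gives $t\le e^\beta\le s$, while conversely the Bernoulli measure $b_p$ with $p=\frac{e^\beta-t}{s-t}$ belongs to $\cM(\tX,\sigma,\lambda,q)$ on that whole range; together with $\beta\ne 0$ this yields exactly the intervals stated.

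For part (1) I would take $y$ periodic of minimal period $n$ and $\beta$ as in the statement, so that $q=\frac1n\sum_{k=0}^{n-1}y_k$ and hence $c_n(y)=0$. First I check $\nu_y\in\cM(\tX,\sigma,\lambda,q)$ by evaluating $\frac{d\nu_y\circ\sigma}{d\nu_y}$ on the atoms $\sigma^k(y)$, using the cocycle identity $c_{k+1}-c_k=c_1\circ\sigma^k$ and $c_1(x)=x_0-q$ to get the value $\lambda^{y_k-q}=r(\sigma^k(y))$; since $\nu_y$ sits on a single finite $\sigma$-orbit it is ergodic, hence extreme, so $\varphi_y\in\ex K_\beta^\infty(\gamma)$. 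Then $\fT_M$ is the cyclic shift on an $n$-point set, so the flow of weights of $M$ is its suspension with constant ceiling $|\beta|=\beta$, i.e.\ the translation flow on a circle of circumference $n\beta$, which is the flow of weights of a type III$_{e^{-n\beta}}$ factor; and since $A=C(X)$ is nuclear, $\cO_E$ is nuclear and $M$ is injective, so by the classification of injective factors $M$ is the Powers factor of type III$_{e^{-n\beta}}$. Finally $n\beta=\sum_{k=0}^{n-1}\bigl((1-y_k)\log s+y_k\log t\bigr)$ gives the product formula, and specializing to $q=0$ and $q=1$, where $\cM(\tX,\sigma,\lambda,0)=\{\delta_{0^\infty}\}$ and $\cM(\tX,\sigma,\lambda,1)=\{\delta_{1^\infty}\}$ (a single fixed point), yields (i) and (ii).

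For part (2), given aperiodic $z$ with $\sum_{n\in\Z}\lambda^{c_n(z)}<\infty$ the same atomwise computation gives $\nu_{\beta,z}\in\cM(\tX,\sigma,\lambda,q)$, and it is ergodic since it is carried by one $\sigma$-orbit, so $\varphi_{\beta,z}\in\ex K_\beta^\infty(\gamma)$; here $\fT_M$ is the translation of $\Z$ on the countably infinite atomic orbit $\{\sigma^n(z)\}_{n\in\Z}$, which is totally dissipative, whence $M$ is semi-finite and a factor. For the exhaustion claim I would run this in reverse: if $\omega\in\ex K_\beta^\infty(\gamma)$ gives a semi-finite factor $M$, the corresponding $\nu\in\cM(\tX,\sigma,\lambda,q)$ is ergodic and $\sigma$ on $(\tX,\nu)$ is dissipative; an ergodic dissipative nonsingular transformation of a standard space is purely atomic with a single orbit of atoms, and this orbit cannot be finite (a cyclic permutation of a finite set is conservative), so $\nu$ is concentrated on the $\sigma$-orbit of an aperiodic point $z$; then (\ref{qi}) forces $\nu(\{\sigma^n(z)\})=\lambda^{c_n(z)}\nu(\{z\})$, hence $\sum_{n\in\Z}\lambda^{c_n(z)}<\infty$ and $\nu=\nu_{\beta,z}$, i.e.\ $\omega=\varphi_{\beta,z}$.

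The hard part is the two places where von Neumann algebra structure theory enters: in part (1), recognizing the suspension over a periodic orbit as a circle of the correct circumference and then invoking Connes's uniqueness of the injective type III$_{e^{-n\beta}}$ factor; and in part (2), the structural input that an ergodic dissipative $\Z$-action is purely atomic and transitive on an infinite set of atoms --- it is precisely this that rules out the non-atomic ergodic members of $\cM(\tX,\sigma,\lambda,q)$ from producing semi-finite factors and hence pins down the exhaustion. The remaining steps --- the cocycle bookkeeping for $c_n$, checking membership in and ergodicity within $\cM(\tX,\sigma,\lambda,q)$, and the arithmetic identity for $n\beta$ --- are routine.
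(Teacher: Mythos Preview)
Your proposal is correct and follows exactly the route the paper intends: the paper states this corollary without a proof block, leaving it as an immediate consequence of Theorem~\ref{shift} together with the atomic measures $\nu_y$ and $\nu_{\beta,z}$ set up in the preceding discussion. The details you supply---the cocycle verification that $\nu_y,\nu_{\beta,z}\in\cM(\tX,\sigma,\lambda,q)$, the suspension computation identifying the flow of weights over an $n$-cycle with rotation on a circle of circumference $n\beta$, the injectivity of $M$ via nuclearity of $\cO_E$, and the fact that an ergodic dissipative nonsingular $\Z$-action is carried by a single infinite orbit of atoms---are precisely the points the paper leaves implicit, and your arguments for each are sound.
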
 

\begin{problem} Determine the type of $M$ for $\varphi_{\beta,z}$. 
\end{problem}

\begin{remark} In the case of $\log t<\beta<\log s$, the KMS state corresponding to the Bernoulli measure $b_p$ 
on $X$ with $p=\frac{e^\beta-t}{s-t}$ is not extreme, and the corresponding von Neumann algebra $M$ is semi-finite. 
Indeed, the reduced flow of weights in this case 
is isomorphic to the two-sided Bernoulli shift with the measure 
$$\nu=\bigotimes_{k=-\infty}^\infty \nu_k,$$
where 
$$\nu_k=\left\{
\begin{array}{ll}
p\delta_0+(1-p)\delta_1 , &\quad k\geq 0 \\
e^{-\beta}sp\delta_0+e^{-\beta}t(1-p)\delta_1 , &\quad k<0
\end{array}
\right..
$$
This is known to be dissipative (see \cite{H82}). 
Thus in principle we should have ergodic decomposition 
$$\nu=\int_{D_\beta/\sigma} \nu_{\beta,z}dm(z),$$
where $D_\beta$ is the set of $z$ satisfying (\ref{C}), 
though we do not know the explicit formula of the measure $m$.  
\end{remark}

Next we show that there exists a diffuse measure in $\ex \cM(\tX,\sigma,\lambda,q)$ for some $\beta$. 
For an aperiodic sequence $z\in \tX$ \textit{not} satisfying (\ref{C}) and $n\in \N$, we set 
$$\nu_{\beta,z,n}=\frac{1}{\sum_{|k|\leq n}\lambda^{c_k(z)}}\sum_{|k|\leq n}\lambda^{c_{k}(z)}\delta_{\sigma^k(z)}.$$
Then 
$$\|\nu_{\beta,z,n}\circ\sigma-\lambda^{c_1}\nu_{\beta,z,n}\|
=\frac{\lambda^{c_{-n}(z)}+\lambda^{c_{n+1}(z)}}{\sum_{|k|\leq n}\lambda^{c_k(z)}}.$$
When this converges to 0 as $n$ tends to infinity, we can get a measure 
in $\cM(\tX,\sigma,\lambda,q)$ by taking a cluster point of 
$\{\nu_{\beta,z,n}\}_{n=1}^\infty$. 
Such a measure concentrates on the orbit closure $Z=\overline{\{\sigma^k(z)\}_{k\in \Z}}$, 
which is a subshift. 
The most tractable class of sequences $z\in \tX$ satisfying the above condition is those with bounded 
$\{c_{k}(z)\}_k$. 

\begin{proposition}\label{diffuse} Let the notation be as above, and let $Z\subset \tX$ be 
a closed shift-invariant subspace without periodic points satisfying  
$$\sup_{k\in \Z,\;z\in Z}|c_k(z)|<\infty.$$ 
Then there exists a diffuse measure in $\ex \cM(\tX,\sigma,\lambda,q)$ supported by $Z$. 
\end{proposition}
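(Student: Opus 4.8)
The plan is to realize the desired measure as an extreme point of the convex face of $\cM(\tX,\sigma,\lambda,q)$ consisting of those measures that are concentrated on $Z$, and then to observe that, under the hypothesis on $Z$, every member of this face is automatically non-atomic. Write $C=\sup_{k\in\Z,\;z'\in Z}|c_k(z')|<\infty$; since $Z$ contains no periodic points, every $z\in Z$ is aperiodic, so the points $\sigma^n(z)$, $n\in\Z$, are pairwise distinct, and $\lambda^{c_k(z')}\in[\lambda^{C},\lambda^{-C}]$ for all $k$ and all $z'\in Z$ because $0<\lambda<1$.

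First I would show that the set $\cM_Z:=\{\mu\in\cM(\tX,\sigma,\lambda,q):\mu(\tX\setminus Z)=0\}$ is nonempty. Fix any $z\in Z$ and form the measures $\nu_{\beta,z,n}$ introduced just before the proposition. The identity displayed there gives
$$\|\nu_{\beta,z,n}\circ\sigma-\lambda^{c_1}\nu_{\beta,z,n}\|
=\frac{\lambda^{c_{-n}(z)}+\lambda^{c_{n+1}(z)}}{\sum_{|k|\le n}\lambda^{c_k(z)}}
\le\frac{2\lambda^{-C}}{(2n+1)\lambda^{C}},$$
which tends to $0$ as $n\to\infty$. Each $\nu_{\beta,z,n}$ is a probability measure supported on $\{\sigma^k(z):|k|\le n\}\subset Z$, so along a weak-$*$ convergent subsequence (the probability measures on the compact metrizable space $\tX$ are weak-$*$ sequentially compact) we obtain a limit $\nu$ with $\nu(\tX\setminus Z)=0$. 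Since $x\mapsto\lambda^{c_1(x)}=\lambda^{x_0-q}$ is continuous and $\mu\mapsto\mu\circ\sigma$ is weak-$*$ continuous, the displayed estimate forces $\nu\circ\sigma=\lambda^{c_1}\nu$; as this density is bounded above and below by positive constants, $\nu$ is $\sigma$-quasi-invariant with $d(\nu\circ\sigma)/d\nu=\lambda^{x_0-q}$, i.e.\ $\nu\in\cM_Z$.

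It remains to extract a good element of $\cM_Z$. The set $\cM_Z$ is convex and weak-$*$ closed (the quasi-invariance identity is weak-$*$ closed and $\mu\mapsto\mu(\tX\setminus Z)$ is lower semicontinuous), hence weak-$*$ compact, so by Krein--Milman it has an extreme point $\nu_0$. Moreover $\cM_Z$ is a face of $\cM(\tX,\sigma,\lambda,q)$: if $\mu=t\mu_1+(1-t)\mu_2$ with $\mu_i\in\cM(\tX,\sigma,\lambda,q)$, $0<t<1$, and $\mu(\tX\setminus Z)=0$, then $\mu_i(\tX\setminus Z)=0$ for $i=1,2$. Hence $\nu_0$ is extreme in $\cM(\tX,\sigma,\lambda,q)$, i.e.\ $\nu_0\in\ex\cM(\tX,\sigma,\lambda,q)$ (under the affine isomorphism of Theorem~\ref{shift} this is exactly the extremality relevant to $\ex K_\beta^\infty(\gamma)$). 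Finally $\nu_0$ is diffuse: if $\nu_0(\{w\})=a>0$ for some $w$, then $w\in Z$ (as $\nu_0(\tX\setminus Z)=0$), hence $w$ is aperiodic; from $d(\nu_0\circ\sigma^n)/d\nu_0=\lambda^{c_n}$ we get $\nu_0(\{\sigma^n(w)\})=\lambda^{c_n(w)}a\ge\lambda^{C}a>0$ for every $n\in\Z$, whence $1=\nu_0(\tX)\ge\sum_{n\in\Z}\nu_0(\{\sigma^n(w)\})=\infty$, a contradiction. Thus $\nu_0$ is a diffuse measure in $\ex\cM(\tX,\sigma,\lambda,q)$ supported by $Z$.

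The main obstacle is the nonemptiness of $\cM_Z$: one must upgrade the purely formal family $\{\nu_{\beta,z,n}\}$ to an honest element of $\cM(\tX,\sigma,\lambda,q)$, and this is precisely where the uniform bound $\sup_{k,z'\in Z}|c_k(z')|<\infty$ enters, making the defect $\|\nu_{\beta,z,n}\circ\sigma-\lambda^{c_1}\nu_{\beta,z,n}\|$ vanish in the weak-$*$ limit. Once $\cM_Z\ne\emptyset$ is in hand, the remaining facts — that $\cM_Z$ is a compact face and that all of its members are automatically non-atomic — are soft.
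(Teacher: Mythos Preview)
Your argument is correct, but it proceeds along a genuinely different line from the paper's. The paper exploits the uniform bound on $c_k$ not through a limiting procedure but by invoking the Gottschalk--Hedlund type result that a uniformly bounded additive cocycle over a compact system is a continuous coboundary: there exists $h\in C(Z)$ with $c_n=h\circ\sigma^n-h$ on $Z$. One then picks any ergodic $\sigma$-invariant probability measure $\nu_0$ on $Z$ and checks directly that the normalized measure $\lambda^{h}\nu_0$ lies in $\ex\cM(\tX,\sigma,\lambda,q)$; diffuseness is immediate since $Z$ has no periodic points and $\nu_0$ is invariant. By contrast, you first build an element of $\cM_Z$ as a weak-$*$ cluster point of the averaging measures $\nu_{\beta,z,n}$, then extract an extreme point via Krein--Milman and the face property, and finally rule out atoms by the mass-blowup argument.

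Your route is more self-contained (it avoids the coboundary theorem), but the paper's is more informative: it exhibits the extreme measure explicitly as a bounded density against an invariant measure, which is precisely the structure probed by the Problem immediately following the proposition (whether a measure in $\ex\cM(\tX,\sigma,\lambda,q)$ can fail to be equivalent to an invariant one). One small cosmetic point: from $d(\nu_0\circ\sigma^n)/d\nu_0=\lambda^{c_n}$ you actually get $\nu_0(\{\sigma^{-n}w\})=\lambda^{c_n(w)}a$, not $\nu_0(\{\sigma^{n}w\})$; since $n$ ranges over all of $\Z$ this does not affect the conclusion.
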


\begin{proof} Since $\sup_{n\in \Z}\|c_n\|_{C(Z)}<\infty$, the $\sigma$-cocycle $\{c_n\}$ restricted to $Z$ 
is a coboundary, that is, there exists $h\in C(Z)$ satisfying 
$c_n(z)=h\circ \sigma^n(z)-h(z)$ for any $z\in Z$ and $n\in \Z$ 
(see \cite[Theorem 2.3]{W00}). 
Let $\nu_0$ be an ergodic shift-invariant probability measure on $Z$. 
Then  
$$\frac{1}{\int_Z\lambda^{h(z)}d\nu_0(z)}\lambda^h\nu_0$$
regarded as a probability measure on $\tX$ belongs to $\ex \cM(\tX,\sigma,\lambda,q)$, and it is 
diffuse because $Z$ has no periodic points.  
\end{proof}

Minimal subshifts arising from a special class of substitutions of two alphabets 
(e.g. Pisot substitutions) provide examples satisfying the assumption in Proposition \ref{diffuse} 
and with a unique ergodic measure $\nu_0$. 
The reader is referred to \cite[Chapter 5]{Q10} for the substitution dynamical systems. 
More precisely, we need a substitution $\zeta$ with the following property: 
the $\zeta$-matrix has an eigenvalue whose modulus is less than or equal to 1 
(see \cite[Definition 5.4]{Q10} for the definition).  
For example, let $Z$ be the two-sided substitution dynamical system arising from the Thue-Morse sequence, 
which is given by the substitution rule $0\to01$, $1\to10$ (see \cite[page 128]{Q10}). 
Let $\beta=\log\sqrt{st}$. 
In this case, we have $q=\frac{1}{2}$ and an easy uniform estimate  
$$-1\leq c_k(z)\leq 1,\quad \forall z\in Z,\;\forall k\in \Z.$$

\begin{cor} Let the notation be as in Example \ref{non-unital}, and  assume $0<t<s<d$. 
There exists a KMS state in $\ex K_{\log\sqrt{st}}^\infty(\gamma)$ with the corresponding factor $M$ 
of type III$_0$. 
\end{cor}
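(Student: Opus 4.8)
The plan is to combine Theorem~\ref{shift} with Proposition~\ref{diffuse} applied to the Thue-Morse subshift. Recall that in Example~\ref{non-unital} with $0<t<s<d$, an infinite type $\beta$-KMS state of the gauge action of $\cO_E$ corresponds, via Theorem~\ref{shift}, to a measure $\nu\in\cM(\tX,\sigma,\lambda,q)$, and the reduced flow of weights for the resulting von Neumann algebra $M$ is isomorphic to $(\tX,\nu,\sigma)$. Since the gauge action has period $T=2\pi/|\beta|$, the flow of weights for $M$ is the suspension flow over $(\tX,\nu,\sigma)$ with constant ceiling $|\beta|$. So it suffices to exhibit a $\nu\in\ex\cM(\tX,\sigma,\lambda,q)$ which is an ergodic diffuse measure: ergodicity of the base transformation makes $M$ a factor (Lemma in Section~3), diffuseness rules out $M$ being of type III$_\lambda$ with $\lambda\neq0$ (the flow would then be transitive, i.e.\ supported on a periodic orbit, contradicting diffuseness), and the existence of a finite invariant measure on the base rules out semifiniteness (dissipativity); hence $M$ is of type III$_0$.

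The concrete input is the choice $\beta=\log\sqrt{st}$, so that $q=\tfrac12$ and $\lambda=t/s$, together with the Thue-Morse two-sided subshift $Z\subset\tX$ defined by the substitution $0\to01$, $1\to10$. First I would verify the claimed uniform estimate $-1\leq c_k(z)\leq 1$ for all $z\in Z$ and $k\in\Z$: since every block of length $2$ occurring in a Thue-Morse sequence contains at least one $0$ and at least one $1$, the partial sums $\sum_{j=0}^{k-1}z_j$ differ from $k/2$ by at most $1/2$ in absolute value (a standard fact: $|2\sum_{j=0}^{k-1}z_j-k|\le 1$), which with $q=1/2$ gives $|c_k(z)|=|\sum_{j=0}^{k-1}z_j-k/2|\le 1/2$, and similarly for $k\le 0$; in particular $\sup_{k,z}|c_k(z)|<\infty$. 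Second, the Thue-Morse subshift $Z$ is well known to be minimal and aperiodic (it has no periodic points) and to be uniquely ergodic, with unique ergodic shift-invariant probability measure $\nu_0$, which is diffuse because $Z$ is infinite and minimal. These are classical facts about substitution dynamical systems; I would cite \cite[Chapter 5]{Q10}.

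With this data in hand, Proposition~\ref{diffuse} applies directly: the cocycle $\{c_n\}$ restricted to $Z$ is bounded, hence a coboundary $c_n=h\circ\sigma^n-h$ for some $h\in C(Z)$, and $\nu:=(\int_Z\lambda^h\,d\nu_0)^{-1}\lambda^h\nu_0$, viewed as a probability measure on $\tX$, lies in $\ex\cM(\tX,\sigma,\lambda,q)$ and is diffuse. It only remains to observe that this $\nu$ is ergodic for $\sigma$: because $\nu$ is equivalent to the uniquely ergodic (hence ergodic) measure $\nu_0$ via the strictly positive continuous density $\lambda^h$, a $\sigma$-invariant Borel set of positive $\nu$-measure has positive $\nu_0$-measure and so is $\nu_0$-null-complemented, giving $\nu$-ergodicity. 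By Theorem~\ref{shift} the reduced flow of weights of the factor $M$ attached to this KMS state is $(\tX,\nu,\sigma)=(Z,\nu,\sigma)$, which is ergodic, finite-measure-preserving, and diffuse; therefore $M$ is a factor, not semifinite, and its flow of weights is not transitive, so $M$ is of type III$_0$.

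I do not expect a serious obstacle here: the only slightly delicate point is making sure the hypotheses of Proposition~\ref{diffuse} are literally met, namely that $Z$ has no periodic points and satisfies the uniform bound on $\{c_k\}$, and that the diffuse measure it produces is genuinely ergodic (so that $M$ is a factor rather than a direct integral); both follow from standard properties of the Thue-Morse system once the combinatorial estimate $|2\sum_{j=0}^{k-1}z_j-k|\le 1$ is recorded. The type III$_0$ conclusion itself is then immediate from the general correspondence between the type of $M$ and its reduced flow of weights (ergodic, dissipative-free, non-transitive) established in Section~3.
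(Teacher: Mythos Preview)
Your approach is exactly the paper's: apply Proposition~\ref{diffuse} to the Thue--Morse subshift at $\beta=\log\sqrt{st}$ (so $q=1/2$), obtain a diffuse ergodic $\nu\in\ex\cM(\tX,\sigma,\lambda,1/2)$ equivalent to the unique invariant measure $\nu_0$, and then read off the type from Theorem~\ref{shift} and the Section~3 dictionary. The logical skeleton is correct.

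There is one small slip in your verification of the cocycle bound. The claim that ``every block of length $2$ occurring in a Thue--Morse sequence contains at least one $0$ and at least one $1$'' is false: both $00$ and $11$ occur (e.g.\ in $0110$). Consequently your bound $|c_k(z)|\le 1/2$ for all $z\in Z$ does not hold; for instance the shift beginning $110\ldots$ gives $c_2=1$. The correct (and sufficient) estimate is the paper's $-1\le c_k(z)\le 1$: for the standard Thue--Morse fixed point one has $c_{2m}=0$ because consecutive pairs at even positions are images $\zeta(0)=01$ or $\zeta(1)=10$, hence $|c_k|\le 1/2$ along that particular sequence; for an arbitrary $z\in Z$ write $c_k(z)=c_{m+k}(z_0)-c_m(z_0)$ for a suitable shift and conclude $|c_k(z)|\le 1$. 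This is enough for Proposition~\ref{diffuse}, so the argument goes through once the justification is repaired.
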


\begin{problem} Does there exist a measure in $\ex \cM(\{0,1\}^\Z,\sigma,\lambda,q)$ 
not equivalent to an invariant measure? 
Such a measure would give rise to a KMS state in $\ex K_\beta^\infty(\gamma)$ with the corresponding 
flow of weights having no invariant measure. 
\end{problem}

In order to construct a measure as above, an obvious trial we could make would be to apply the argument 
before Proposition \ref{diffuse} to a sequence $z\in \tX$ with $\{c_n(z)\}_{n}$ growing (or oscillating) slowly. 
For example, we could examine sequences discussed in \cite{G72}. 
Let $\{k(n)\}_{n=1}^\infty$ be a sequence of natural numbers with $k(n)\geq 3$, 
and set $l(0)=1$ and $l(n)=k(1)k(2)\cdots k(n)$ for $n \geq 1$. 
We define words $a(n),b(n)\in \{0,1\}^{\{0,1,\cdots,l(n)-1\}}$ by 
$a(0)=1$, $b(0)=0$, and  
$$a(n)=a(n-1)b(n-1)a(n-1)^{k(n)-2},$$
$$b(n)=a(n-1)b(n-1)^{k(n)-1},$$
for $n\geq 1$. 
Then we have 
$$\sum_{j=0}^{l(n)-1}a(n)_j=\frac{1}{2}l(n)+\frac{1}{2}\prod_{j=1}^n(k(j)-2),$$
$$\sum_{j=0}^{l(n)-1}b(n)_j=\frac{1}{2}l(n)-\frac{1}{2}\prod_{j=1}^n(k(j)-2).$$
We define $z\in \tX$ by setting $z_j=0$ for negative $j$, and $z_j=a(n)_j$ for $j\geq 0$, 
where $n$ is a sufficiently large number. 
Let $q=1/2$. 
The above computation implies  
$$c_{l(n)}(z)=\frac{1}{2}\prod_{j=1}^n(k(j)-2),$$
$$c_{2l(n)}(z)=0.$$
Defining a probability measure $\nu_n$ on $\tX$ by 
$$\nu_n=\frac{1}{\sum_{j=0}^{l(n)-1}\lambda^{c_j(z)}}\sum_{j=0}^{l(n)-1}\lambda^{c_j(z)}\delta_{\sigma^j(z)},$$
we get 
$$\|\nu_n\circ \sigma-\lambda^{c_1}\nu_n\|=\frac{1+\lambda^{c_{l(n)}(z)}}{\sum_{j=0}^{l(n)-1}\lambda^{c_j(z)}}
\leq \frac{2}{n}.$$
Thus any cluster point of $\{\nu_n\}_{n=0}^\infty$ belongs to $\cM(\tX,\sigma,\lambda,1/2)$, 
which is supported by the minimal subshift 
$Z=\overline{\{\sigma^n(z)\}_{n=0}^\infty}\setminus \{\sigma^n(z)\}_{n=0}^\infty$. 
The subshift $(Z,\sigma)$ is a Toeplitz flow, and it comes from a substitution 
when $k(n)$ is a constant function (see \cite{GJ00}). 
It is uniquely ergodic if and only if $\sum_{n=1}^\infty \frac{1}{k(n)}=\infty$. 
In fact when it is not uniquely ergodic, any $q$ with
$$\frac{1}{2}\leq q\leq \frac{1}{2}+\frac{1}{2}\prod_{j=1}^\infty (1-\frac{2}{k(j)})$$
works as well.

We end this paper with a noncommutative variant of Example \ref{non-unital}. 
Let $A$ be the gauge invariant CAR algebra, that is the fixed point algebra of 
the CAR algebra $\otimes_{k=0}^\infty M_2(\C)$ under the product action 
$$\bigotimes_{k=0}^\infty\Ad\left(
\begin{array}{cc}
1 &0  \\
0 &e^{t\sqrt{-1}} 
\end{array}
\right)$$
of $\T$. 
For $0\leq p\leq 1$, we denote by $\tau_p$ the restriction of the Powers state 
$$\bigotimes_{k=0}^\infty \Tr(\left(
\begin{array}{cc}
p &0  \\
0 &1-p 
\end{array}
\right)\cdot 
)$$
to $A$. 
It is known that $\{\tau_p\}_{0\leq p\leq 1}$ exhausts all extreme trace states of $A$ 
(see \cite[Appendix]{R80}). 
We denote 
$$e_0=\left(
\begin{array}{cc}
1 &0  \\
0 &0 
\end{array}
\right),\quad 
e_1=\left(
\begin{array}{cc}
0 &0  \\
0 &1 
\end{array}
\right).$$

\begin{example}\label{GICAR} Let $A$ be the gauge invariant CAR algebra. 
We set 
$$\alpha_i(x)=\left\{
\begin{array}{ll}
e_0\otimes x , &\quad 1\leq i\leq s \\
e_1\otimes x , &\quad s<i\leq d
\end{array}
\right..
$$
In this case Eq.(\ref{PF1}) and Eq.(\ref{PF2}) become $\tau(1_A)=\frac{e^\beta}{d}$ and 
\begin{equation}\label{PF5}
e^{-\beta}\tau(se_0\otimes x+te_1\otimes x)=\tau(x). 
\end{equation}

Assume that $d$ is even and $s=t=\frac{d}{2}>1$ first. 
Then an infinite type $\beta$-KMS state exists if and only if $\beta=\log \frac{d}{2}$, 
and $K_{\log \frac{d}{2}}^\infty(\gamma)$ is isomorphic to the tracial simplex of $A$ as affine spaces. 
For an extreme infinite type KMS state $\varphi$, the trace $\tau$ is extreme too and 
$\fZ(B_0)$ is trivial, and so is the tail boundary of $P_0$.  
Thus every extreme KMS state of infinite type gives rise to the Powers factor of type III$_{\frac{2}{d}}$.  

Assume now that $t<s\leq d$. 
We can express $\tau$ as 
$$\tau=\frac{e^\beta}{d}\int_{[0,1]}\tau_rdm(r),$$
with a unique probability measure $m$ on $[0,1]$, and  Eq.(\ref{PF5}) is
$$\int_{[0,1]}\tau_r(se_0\otimes x+te_1\otimes x)dm(r)=e^{\beta}\int_{[0,1]}\tau_r(x)dm(r). $$
Since the left-hand side is 
$$\int_{[0,1]}\big(sr+t(1-r)\big)\tau_r(x)dm(r),$$ 
and the measure $m$ concentrates on $r$ satisfying 
$sr+t(1-r)=e^\beta$, and we get 
$$\tau=\frac{e^\beta}{d}\tau_{\frac{e^\beta-t}{s-t}}.$$
Thus an infinite type $\beta$-KMS state for the gauge action exists if and only if $\log t\leq \beta\leq \log s$, 
$\beta\neq 0$, and $K_\beta^\infty(\gamma)$ is a singleton for each $\beta$. 
Every $\beta$-KMS state of infinite type gives rise the Powers factor of type III$_{e^{-\beta}}$.  
\end{example}

\end{document}